\newtheorem{theorem}{Theorem}%[section]
\newaliascnt{lemma}{theorem}
\newtheorem{lemma}[lemma]{Lemma}
\newaliascnt{proposition}{theorem}
\newtheorem{proposition}[proposition]{Proposition}
\newaliascnt{corollary}{theorem}
\newtheorem{corollary}[corollary]{Corollary}
\newaliascnt{conjecture}{theorem}
\newtheorem{conjecture}[conjecture]{Conjecture}
\newaliascnt{example}{theorem}
\newtheorem{example}[example]{Example}
\def\tagform@#1{\maketag@@@{\ignorespaces#1\unskip\@@italiccorr}}
\let\orgtheequation\theequation
\def\theequation{(\orgtheequation)}
\def\equationautorefname~{}
\theoremstyle{definition}
\newtheorem*{definition}{Definition}
\newtheorem*{remark}{Remark}
\newcommand{\arxiv}[1]{%
 \href{http://front.math.ucdavis.edu/#1}{ArXiv:#1}}
\DeclareMathOperator*{\argmax}{\arg\!\max}
\DeclareMathOperator*{\argmin}{\arg\!\min}
\newcommand{\e}{\varepsilon}
\newcommand{\N}{{\mathbb N}}
\newcommand{\R}{{\mathbb R}}
\newcommand{\area}{\operatorname{Area}}
\newcommand{\ud}{\,\mathrm{d}}
\begin{document}

\title[Optimal stretching for lattice points]{Optimal stretching for lattice points and eigenvalues}
\author[]{Richard S. Laugesen and Shiya Liu}
\address{Department of Mathematics, University of Illinois, Urbana,
IL 61801, U.S.A.}
\email{\ Laugesen\@@illinois.edu \; sliu63\@@illinois.edu (shiyaliu.zju\@@gmail.com)}
\date{\today}

\keywords{Lattice points, planar convex domain, $p$-ellipse, Lam\'{e} curve, spectral optimization, Laplacian, Dirichlet eigenvalues, Neumann eigenvalues}
\subjclass[2010]{\text{Primary 35P15. Secondary 11P21, 52C05}}

\begin{abstract}
We aim to maximize the number of first-quadrant lattice points in a convex domain with respect to reciprocal stretching in the coordinate directions. The optimal domain is shown to be asymptotically balanced, meaning that the stretch factor approaches $1$ as the ``radius" approaches infinity. In particular, the result implies that among all $p$-ellipses (or Lam\'{e} curves), the $p$-circle encloses the most first-quadrant lattice points as the radius approaches infinity, for $1<p< \infty$. 

The case $p=2$ corresponds to minimization of high eigenvalues of the Dirichlet Laplacian on rectangles, and so our work generalizes a result of Antunes and Freitas. Similarly, we generalize a Neumann eigenvalue maximization result of van den Berg, Bucur and Gittins. Further, Ariturk and Laugesen recently handled $0<p<1$ by building on our results here.

The case $p=1$ remains open, and is closely related to minimizing energy levels of harmonic oscillators: which right triangles in the first quadrant with two sides along the axes will enclose the most lattice points, as the area tends to infinity? 
\end{abstract}

\maketitle

\vspace*{-12pt}

\section{\bf Introduction}

Among ellipses of given area centered at the origin and symmetric about both axes, which one encloses the most integer lattice points in the open first quadrant? One might guess the optimal ellipse would be circular, but a non-circular ellipse can enclose more lattice points, as shown in \autoref{fig:counterexample}. Nonetheless, optimal ellipses must become more and more circular as the area increases to infinity, by a striking result of Antunes and Freitas \cite{AF13}. 

To formulate the problem more precisely, consider the number of positive-integer lattice points lying in the elliptical region
\[
\Big( \frac{x}{s^{-1}} \Big)^{\! 2} + \Big( \frac{y}{s} \Big)^{\! 2} \leq r^2 ,
\]
where the ellipse has ``radius'' $r>0$ and semiaxes proportional to $s^{-1}$ and $s$. Notice that the area $\pi r^2$ of the ellipse is independent of the ``stretch factor'' $s$. Denote by $s=s(r)$ a value (not necessarily unique) of the stretch factor that maximizes the lattice point count. The theorem of Antunes and Freitas says $s(r) \to 1$ as $r \to \infty$, as illustrated in \autoref{fig:p2}. In other words, optimal ellipses become circular in the infinite limit. (Their theorem was stated differently, in terms of minimizing the $n$-th eigenvalue of the Dirichlet Laplacian on rectangles, with the square being asymptotically minimal. \autoref{sec:relation} explains the connection.) The analogous result for optimal ellipsoids becoming asymptotically spherical was proved recently in three dimensions by van den Berg and Gittins \cite{BBG16b} and in higher dimensions by Gittins and Larson \cite{GL17}, once again in the eigenvalue formulation. 

\begin{figure}
\includegraphics[scale=0.35]{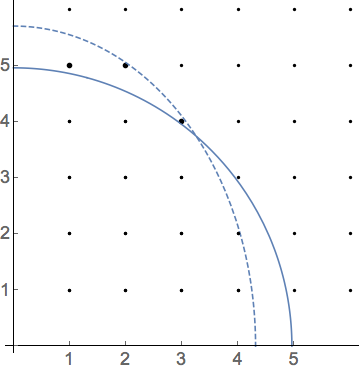}
\caption{\label{fig:counterexample}Circle $s=1$ and ellipse $s=1.15$, for radius $r=4.96$. The ellipse encloses three more points than the circle, as shown in bold.}
\end{figure}

This paper extends the result of Antunes and Freitas from circles to essentially arbitrary concave curves in the first quadrant that decrease between the intercept points $(0,1)$ and $(1,0)$. The ``ellipses'' in this situation are the images of the concave curve under rescaling by $s^{-1}$ and $s$ in the horizontal and vertical directions, respectively. \autoref{thm:s_bounded} says the maximizing $s(r)$ is bounded. \autoref{th:S_limit} shows under a mild monotonicity hypothesis on the second derivative of the curve that $s(r) \to 1$ as $r \to \infty$. Thus the most ``balanced'' curve in the family will enclose the most lattice points in the limit. 

Marshall \cite{marshall} recently extended this result to higher dimensions by somewhat different methods. We have generalized also to translated lattices in $2$-dimensions \cite{Lau_Liu2}.

\begin{figure}
\includegraphics[scale=0.45]{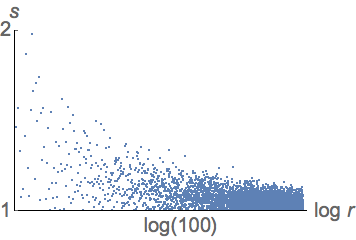}
\caption{\label{fig:p2}Optimal $s$-values for maximizing the number of lattice points in the $2$-ellipse. The graph plots the largest value $s(r)$ versus $\log r$. The plotted $r$-values are multiples of $\sqrt{3}/10$, an irrational number chosen in the hope of exhibiting generic behavior. The horizontal axis is at height $s=1$.}
\end{figure}

\autoref{th:S_limit_general} allows the curvature to blow up or degenerate at the intercept points, which permits us to treat the family of $p$-ellipses for $1<p<\infty$. In each case the $p$-circle is asymptotically optimal for the lattice counting problem in the first quadrant. The case $p=1$ is open problem. Our numerical evidence in \autoref{sec:oneellipse} suggests that the first-quadrant right triangle enclosing the most lattice points does \emph{not} necessarily approach a 45--45--90 degree triangle as $r \to \infty$. Instead one seems to get an infinite limit set of optimal triangles. See \autoref{conj:p_1}, where we describe the recent proof by Marshall and Steinerberger \cite{marshall_steinerberger}.

If one counts lattice points in the \emph{closed} first quadrant, that is, counting points on the axes as well, then the results reverse direction from maximization to minimization of the lattice count. \autoref{th:S_limit_neumann} shows that the value $s=s(r)$ minimizing the number of enclosed lattice points will tend to $1$ as $r \to \infty$. In the case of circles and ellipses, this result was obtained recently by van den Berg, Bucur and Gittins \cite{BBG16a} (and in higher dimensions by Gittins and Larson \cite{GL17}, generalized by Marshall \cite{marshall}). As explained in \autoref{sec:relation}, they showed that the maximizing rectangle for the $n$-th eigenvalue of the Neumann Laplacian must approach a square as $n \to \infty$. 

This paper builds on the framework of Antunes and Freitas for ellipses,  with new ingredients introduced to handle general concave curves. First we develop a new non-sharp bound on the counting function  (\autoref{prop:counting_ineq}) in order to control the stretch factor $s(r)$. Then we prove more precise lattice counting estimates (\autoref{th:asymptotic}) of Kr\"{a}tzel type, relying on a theorem of van der Corput (\autoref{app-exponential}). 

Convex decreasing curves in the first quadrant, such as $p$-ellipses with $0<p<1$, have been treated by Ariturk and Laugesen \cite{AL17} by building on this paper's results. 

\subsection*{Spectral motivations and results} 
This paper is inspired by recent efforts to understand the behavior of high eigenvalues of the Laplacian. Write $\lambda_n$ for the $n$-th eigenvalue of the Dirichlet Laplacian on a bounded domain $\Omega$ of area $1$ in the plane. (We restrict to $2$ dimensions for simplicity.) Denote the minimum value of this eigenvalue over all such domains by $\lambda_n^*$, and suppose it is achieved on a domain $\Omega_n^*$. What can one say about the shape of this minimizing domain? 

For the first eigenvalue, the minimizing domain $\Omega_1^*$ is a disk, by the Faber--Krahn inequality. For the second eigenvalue, $\Omega_2^*$ is a union of two disjoint equal-area disks, as Krahn and later P. Szego showed. A long-standing conjecture says $\Omega_3^*$ should be a disk and $\Omega_4^*$ should be a union of disjoint non-equal-area disks. For higher eigenvalues ($n \geq 5$),  minimizing domains found numerically do not have recognizable shapes; see \cite{AF12,Oud04} and references therein. Antunes and Freitas remark, though, that the ``most natural guess'' is $\Omega_n^*$ approaches a disk as $n \to \infty$, which is known to occur if the area normalization is strengthened to a perimeter normalization \cite{AF16,BF13}. This conjecture would imply the famous P\'olya conjecture $\lambda_n \geq 4\pi n /|\Omega|$, as Colbois and El Soufi \cite[Corollary~2.2]{colbois_soufi} showed using subadditivity of $n \mapsto \lambda_n^*$.

A partial result of Freitas \cite{f16} succeeds in determining the leading order asymptotic as $n \to \infty$ of the minimum value of the eigenvalue sum $\lambda_1+\dots+\lambda_n$ (rather than of $\lambda_n$ itself). This result provides no information on the shapes of the minimizing domains. Larson \cite{Larson} shows among convex domains that the disk asymptotically maximizes the Riesz means of the Laplace eigenvalues, for Riesz exponents $\geq 3/2$. If this Riesz exponent could be lowered to $0$, giving asymptotic maximality of the disk for the eigenvalue counting function, then one would obtain the desired conjecture about the eigenvalue minimizing domain $\Omega_n^*$.

A complete resolution for rectangular domains was found by Antunes and Freitas \cite{AF13}, using lattice counting methods as explained in \autoref{sec:relation}. They proved that the minimizing domain for $\lambda_n$ among rectangles approaches a square as $n \to \infty$. Similarly, the cube is asymptotically minimal in $3$ and higher dimensions \cite{BBG16b,GL17}.

\vspace*{-5pt}

\subsection*{Open problem for the harmonic oscillator}
Asymptotic optimality of the square for minimizing Dirichlet eigenvalues of the Laplacian on rectangles suggests an analogous open problem for harmonic oscillators. Consider the Schr\"odinger operator in $2$ dimensions with parabolic potential $(sx)^2 + (y/s)^2$, where $s> 0$ is a parameter. Write $s(n)$ for a parameter value that minimizes the $n$-th eigenvalue of this operator. What is the limiting behavior of $s(n)$ as $n \to \infty$?

The results on rectangular domains (which may be regarded as infinite potential wells) might suggest $s(n) \to 1$, but we think that it is not the case. Instead we believe $s(n)$ might cluster around infinitely many values as $n \to \infty$. Indeed, after rescaling, the Schr\"odinger operator has eigenvalues of the form $s(j-1/2)+(k-1/2)/s$, which leads to a lattice point counting problem inside right triangles, like in \autoref{sec:oneellipse} for $p = 1$, except now the lattices are shifted by $1/2$ to the left and downwards. For the unshifted lattice, numerical work in \autoref{sec:oneellipse} suggests that the optimal stretching parameter $s$ does not converge to $1$, and instead has many cluster points as $r \to \infty$. Recent investigations \cite[Section~10]{Lau_Liu2} suggest that this clustering phenomenon persists for shifted lattices and hence for the harmonic oscillator.

\section{\bf Assumptions and definitions}
\label{sec:assumptions}

The first quadrant is the \emph{open} set $\{ (x,y) : x,y>0 \}$.

Assume throughout the paper that $\Gamma$ is a concave, strictly decreasing curve in the first quadrant. Our theorems assume the $x$- and $y$-intercepts of the curve are equal, occurring at $x=L$ and $y=L$ respectively, as shown in \autoref{fig:gammafig}. Write $\area(\Gamma)$ for the area enclosed by the curve $\Gamma$ and the $x$- and $y$-axes. 

\begin{figure}
\includegraphics[scale=0.35]{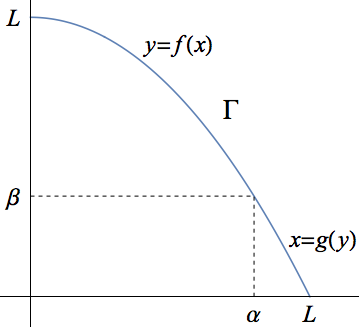}
\caption{\label{fig:gammafig}A concave decreasing curve $\Gamma$ in the first quadrant, with intercepts at $L$. The point $(\alpha,\beta)$ on the curve is relevant to \autoref{th:S_limit}.}
\end{figure}

We represent the curve $\Gamma$ by $y=f(x)$ for $x\in [0,L]$, so that $f$ is a concave strictly decreasing function, and of course $f$ is continuous. In particular 
\[
L=f(0)>f(x)>f(L)=0 
\]
whenever $x\in (0,L)$. Denote the inverse function of $f(x)$ by $g(y)$ for $y \in[0,L]$, so that $g$ also is concave and strictly decreasing.

We define a rescaling of the curve by parameter $r>0$:
\begin{align*}
r\Gamma 
& = \text{image of $\Gamma$ under the radial scaling $\left(\begin{smallmatrix}
r & 0 \\
0 & r
\end{smallmatrix}\right)$} \\
& = \text{graph of $rf({x/r})$,}
\end{align*}
and define an area-preserving stretch of the curve by:
\begin{align*}
\Gamma(s)
& = \text{image of $\Gamma$ under the diagonal scaling $\left(\begin{smallmatrix}
s^{-1} & 0\\
0 & s
\end{smallmatrix}\right)$} \\
& = \text{graph of $s  f({sx})$,}
\end{align*}
where $s>0$. In other words, $\Gamma(s)$ is obtained from $\Gamma$ after compressing the $x$-direction by $s$ and stretching the $y$-direction by $s$. Define the counting function for 
$r\Gamma(s)$ by 
\begin{align*}
N(r,s) &=\text{number of positive-integer lattice points lying inside or on $r\Gamma(s)$ } \\
&=\# \big\{ (j,k)\in\mathbb{N} \times \mathbb{N}:k\leq rsf(js/r) \big\}. 
\end{align*}
For each $r>0$, consider the set
\[
S(r) = \argmax_{s >0} N(r,s) 
\]
consisting of the $s$-values that maximize the number of first-quadrant lattice points enclosed by the curve $r\Gamma(s)$. The set $S(r)$ is well-defined because the maximum is indeed attained, as the following argument shows. The curve $r\Gamma(s)$ has $x$-intercept at $rs^{-1}L$, which is less than $1$ if $s>rL$ and so in that case the curve encloses no positive-integer lattice points. Similarly if $s<(rL)^{-1}$, then $r\Gamma(s)$ has height less than $1$ and contains no lattice points in the first quadrant. Thus for each fixed $r>0$, if $s$ is sufficiently small or sufficiently large then the counting function $N(r,s)$ equals zero, while obviously for intermediate values of $s$ the integer-valued function $s \mapsto N(r,s)$ is bounded. Hence $N(r,s)$ attains its maximum at some $s>0$.

For later reference, we write down this bound on optimal $s$-values. 
\begin{lemma}[$r$-dependent bound on optimal stretch factors]\label{lemma:bound}
If $\Gamma$ is a concave, strictly decreasing curve in the first quadrant with equal intercepts (as in \autoref{fig:gammafig}), then 
\[
S(r)\subset \big[(rL)^{-1}, rL\big] \qquad \text{whenever $r \geq 2/L$.}
\]
\end{lemma}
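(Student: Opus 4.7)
The plan is to use the $x$- and $y$-intercept computations already made in the paragraph preceding the lemma, and then verify that the maximum of $N(r,\cdot)$ is strictly positive once $r \geq 2/L$; the containment will then follow by contrapositive.

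First I would record the two one-sided bounds that are essentially already in the text. The curve $r\Gamma(s)$ is the graph of $y = rs\,f(sx/r)$, so it meets the axes at $x = rL/s$ and $y = rsL$. If $s > rL$ then the $x$-intercept satisfies $rL/s < 1$, so no lattice point with $j \geq 1$ lies in the enclosed region, whence $N(r,s) = 0$. Symmetrically, if $s < (rL)^{-1}$ then the $y$-intercept satisfies $rsL < 1$ and $N(r,s) = 0$. Thus any $s \notin [(rL)^{-1}, rL]$ gives $N(r,s) = 0$.

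The remaining step is to show that for $r \geq 2/L$ we have $\max_{s > 0} N(r,s) \geq 1$, since then any maximizer $s \in S(r)$ must satisfy $N(r,s) \geq 1 > 0$ and the previous paragraph forces $s \in [(rL)^{-1}, rL]$. I would do this by testing the stretch factor $s=1$ against the single lattice point $(1,1)$. The lattice point $(1,1)$ lies in the region enclosed by $r\Gamma$ precisely when $1 \leq rf(1/r)$, i.e., $f(1/r) \geq 1/r$. Now concavity of $f$ on $[0,L]$ with $f(0) = L$ and $f(L) = 0$ implies that the graph of $f$ lies on or above the chord joining $(0,L)$ and $(L,0)$, giving
\[
f(x) \geq L - x \qquad \text{for } x \in [0,L].
\]
Applying this at $x = 1/r$ (which lies in $[0,L]$ since $r \geq 2/L > 1/L$), we obtain $f(1/r) \geq L - 1/r$. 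The hypothesis $r \geq 2/L$ rearranges to $L - 1/r \geq 1/r$, so $f(1/r) \geq 1/r$ and hence $(1,1)$ is enclosed by $r\Gamma$. Therefore $N(r,1) \geq 1$, and the conclusion follows from the first paragraph.

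There is no real obstacle here; the statement is essentially a packaging of the intercept computation together with one concavity-based lower bound to rule out the trivial case where every $s > 0$ gives $N(r,s) = 0$. The only subtlety is picking the correct threshold $r \geq 2/L$, which is dictated precisely by the need to place at least one lattice point strictly inside $r\Gamma$, and the chord bound $f \geq L - x$ makes this threshold explicit and sharp for the triangular case.
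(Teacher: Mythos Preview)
Your proof is correct and follows essentially the same approach as the paper: both argue that $N(r,s)=0$ outside $[(rL)^{-1},rL]$ from the intercept computations, and then use concavity (via the chord bound) to show $(1,1)$ lies under $r\Gamma(1)$ when $r\geq 2/L$, giving $N(r,1)\geq 1$. The only difference is that you spell out the chord inequality $f(x)\geq L-x$ explicitly, whereas the paper simply asserts ``by concavity, $r\Gamma(1)$ encloses the point $(1,1)$.''
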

\begin{proof}
The curve $r\Gamma(1)$ has horizontal and vertical intercepts at $rL\geq 2$. Hence by concavity, $r\Gamma(1)$ encloses the  point $(1,1)$, and so the counting function $s \mapsto N(r,s)$ is greater than zero when $s=1$. On the other hand when $s< (rL)^{-1}$ or $s> rL$, we know $N(r,s)=0$ by the paragraph before the lemma. Thus the maximum can only be attained when $s$ lies in the interval $\big[(rL)^{-1}, rL\big]$. 
\end{proof}

\section{\bf Main results}
\label{sec:mainresults}

The curve $\Gamma$ has $x$- and $y$-intercepts at $L$, in the theorems in that follow; see \autoref{fig:gammafig}. We start by improving \autoref{lemma:bound} to show the maximizing set $S(r)$ is bounded, and the bounds can be evaluated explicitly in the limit as $r \to \infty$. 
\begin{theorem}[Uniform bound on optimal stretch factors] \label{thm:s_bounded}
If $\Gamma$ is a concave, strictly decreasing curve in the first quadrant then 
\[
S(r) \subset [s_1,s_2] \qquad \text{for all $r \geq 2/L$,}
\]
for some constants $s_1,s_2>0$. Furthermore, given $\e>0$,  
\[
S(r) \subset \big[\frac{1}{4+\e},4+\e\big] \qquad \text{for all large $r$.}
\]
\end{theorem}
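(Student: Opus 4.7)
My plan is to contrast an upper bound on $N(r, s)$ for extreme $s$ against a lower bound on $N(r, 1)$, forcing optimal stretch factors into a bounded range. Since $\Gamma$ is concave with intercepts at $L$, the chord from $(0, L)$ to $(L, 0)$ lies below $\Gamma$, so $r\Gamma$ contains the right triangle with legs $rL$, yielding the crude estimate $N(r, 1) \geq \tfrac{1}{2} r^2 L^2 - C_1 r$. A tighter left Riemann sum argument using only the decrease of $f$ gives
\[
\sum_{j=1}^{\lfloor rL \rfloor} f(j/r) \;\geq\; rA - L,
\]
which, after floor corrections, yields $N(r, 1) \geq r^2 A - 2 rL$, where $A = \area(\Gamma)$. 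For the upper bound on $N(r, s)$, I express $N(r, s) \leq \sum_{j=1}^{\lfloor rL/s\rfloor} rs\, f(js/r)$ and quantify the deficit in this right Riemann sum via concavity: on each subinterval $[(j-1)s/r,\, js/r]$, $f$ lies above the chord joining its endpoints, so the per-interval gap between the integral and the right-rectangle approximation is at least $\tfrac{s}{2r}\bigl(f((j-1)s/r) - f(js/r)\bigr)$. Telescoping yields
\[
N(r, s) \;\leq\; r^2 A - \tfrac{1}{2}rs\bigl(L - f(Js/r)\bigr),
\]
where $J = \lfloor rL/s\rfloor$. The symmetric analysis via the inverse function $g$ produces a dual bound handling small $s$. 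This is the content of the non-sharp counting inequality referenced in the introduction (\autoref{prop:counting_ineq}).

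For the uniform assertion, when $s \leq rL/2$ one has $Js/r \geq L/2$, so $f(Js/r) \leq f(L/2) < L$. Combined with $N(r, s) \geq N(r, 1)$ for $s \in S(r)$, this forces $\tfrac{1}{2}rs\bigl(L - f(L/2)\bigr) \leq 2rL$, giving $s \leq 4L/(L - f(L/2)) =: s_2$, an explicit uniform upper bound depending only on $\Gamma$. The dual inequality gives a matching lower bound $s \geq 1/s_2$, proving $S(r) \subset [1/s_2, s_2]$ for all $r \geq 2/L$.

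For the sharper bound $[1/(4+\e),\, 4 + \e]$ valid as $r \to \infty$, once $s$ is restricted to the compact interval $[1/s_2, s_2]$, the quantity $Js/r \in (L - s/r, L]$ tends to $L$ uniformly in $s$; by continuity of $f$ at $L$, $f(Js/r) \to 0$ uniformly in $s$. Hence the upper bound sharpens to $N(r, s) \leq r^2 A - \tfrac{1}{2}rsL + o(r)$. Combined with $N(r, 1) \geq r^2 A - 2rL$, the optimality $N(r,s) \geq N(r,1)$ forces $\tfrac{1}{2}rsL \leq 2rL + o(r)$, i.e.\ $s \leq 4 + o(1)$. The dual argument yields $s \geq 1/4 - o(1)$. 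For any $\e > 0$ we conclude $S(r) \subset [1/(4+\e),\, 4+\e]$ once $r$ is sufficiently large. The main obstacle is the Riemann-sum deficit computation underlying the upper bound, which requires careful use of concavity, together with ensuring the uniform convergence $f(Js/r) \to 0$ across the compact $s$-range.
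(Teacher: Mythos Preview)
Your approach is essentially the same as the paper's: the concavity-based triangle deficit bound you derive is exactly \autoref{prop:counting_ineq}, and the sharpening via $f(Js/r)\to 0$ on the already-bounded $s$-range mirrors the paper's argument precisely. Your lower bound $N(r,1)\geq r^2A-2rL$ is even slightly cleaner than the paper's $r^2A-2rL-1$ from \autoref{le:basic}.

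There is one small gap. You justify $Js/r\geq L/2$ only under the hypothesis $s\leq rL/2$, but \autoref{lemma:bound} guarantees merely $s\leq rL$, so the range $s\in(rL/2,\,rL]$ is left unaddressed. The fix is immediate: in that range $rL/s\in[1,2)$ forces $J=1$, whence $Js/r=s/r>L/2$ and the same inequality $f(Js/r)\leq f(L/2)$ follows. (Equivalently, the paper observes that $\lfloor x\rfloor\geq x/2$ whenever $x\geq 1$, applied to $x=rL/s$.) Without this patch your argument does not actually produce a uniform bound valid for all $r\geq 2/L$.

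A cosmetic point: the dual argument yields $s^{-1}\leq 4L/(L-g(L/2))$, so the lower bound is $s_1=(L-g(L/2))/(4L)$ rather than $1/s_2$; these coincide only when $(L/2,L/2)\in\Gamma$. The theorem only asks for \emph{some} $s_1,s_2>0$, so this does not affect correctness.
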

The proof appears in \autoref{sec:boundedness}. 

If the concave decreasing curve is smooth with monotonic second derivative, then in addition to being bounded above and below the maximizing set $S(r)$ converges to $\{ 1 \}$, as the next theorem shows. Recall that $g$ is the inverse function of $f$. 
\begin{theorem}[Optimal concave curve is asymptotically balanced]\label{th:S_limit}
Assume $(\alpha,\beta) \in \Gamma$ is a point in the first quadrant such that $f \in C^2[0, \alpha]$ with $f'<0$ on $(0,\alpha]$ and $f'' < 0$ on $[0, \alpha]$, and similarly $g \in C^2[0, \beta]$ with $g'<0$ on $(0,\beta]$ and $g'' < 0$ on $[0, \beta]$. Further suppose $f''$ is monotonic on $[0,\alpha]$ and $g''$ is monotonic on $[0,\beta]$. 

Then the optimal stretch factor for maximizing $N(r,s)$ approaches $1$ as $r$ tends to infinity, with
\[
S(r) \subset \big[ 1-O(r^{-1/6}),1+O(r^{-1/6}) \big] ,
\]
and the maximal lattice count has asymptotic formula 
\[
\max_{s > 0} N(r,s) = r^2\area (\Gamma)-rL + O(r^{2/3}) .
\]
\end{theorem}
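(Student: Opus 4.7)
The plan is to derive a two-term Kr\"atzel-type asymptotic expansion for $N(r,s)$ that holds uniformly in $s$ on compact subintervals of $(0,\infty)$, and then exploit its explicit $s$-dependence to localize $S(r)$ near $s=1$.

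By \autoref{thm:s_bounded}, any optimal stretch factor $s \in S(r)$ lies in a fixed compact interval $[s_1,s_2] \subset (0,\infty)$ for all $r \geq 2/L$, so it suffices to analyze $N(r,s)$ uniformly for such $s$. The heart of the argument is the precise asymptotic
\[
N(r,s) = r^2\area(\Gamma) - \frac{rL(s+s^{-1})}{2} + O(r^{2/3}),
\]
with the implied constant uniform for $s \in [s_1,s_2]$; this should be the content of the Kr\"atzel-type bound \autoref{th:asymptotic}. The leading term $r^2\area(\Gamma)$ is $s$-independent by area preservation and arises from Riemann-summing $\sum_j rsf(js/r)\cdot(s/r)$ against $\int_0^L f$. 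The linear term decomposes as $-rsL/2$ (the Euler--Maclaurin endpoint correction at $f(0)=L$, after multiplying the sum $\sum_{j=1}^J f(js/r)$ by $rs$) plus $-rs^{-1}L/2$ (the ``mean value $\tfrac12$'' contribution from the fractional-part sum $\sum_{j=1}^J \{rsf(js/r)\}$ over its $J \approx rs^{-1}L$ terms). Together these reflect the two axis intercepts $rs^{-1}L$ and $rsL$ of the stretched curve.

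To control the oscillatory remainder in the fractional-part sum, I would split the index range into three pieces: the portion near $j=0$ where the curve is well described by $f$ on $[0,\alpha]$; the portion near $j=rs^{-1}L$ where, after inverting to $x = g(y)/s \cdot r$, the curve is described by $g$ on $[0,\beta]$; and a bounded middle block contributing at most $O(r)$ trivially. On the two end portions, the hypotheses $f,g \in C^2$ with $f'',g''<0$ (so $|f''|,|g''|$ are bounded below) and with $f''$ and $g''$ monotone are precisely the hypotheses needed by van der Corput's exponential sum estimate (\autoref{app-exponential}). Applying it to the relevant truncated exponential sums and summing dyadically gives the $O(r^{2/3})$ error uniformly in $s \in [s_1,s_2]$.

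With the asymptotic established, both conclusions follow at once. For any $s \in S(r)$, comparison with $s=1$ gives
\[
0 \leq N(r,s) - N(r,1) = -\frac{rL}{2}\big(s + s^{-1} - 2\big) + O(r^{2/3}).
\]
Since $s + s^{-1} - 2 = (s-1)^2/s$ and $s$ is bounded away from $0$ and $\infty$, this forces $(s-1)^2 = O(r^{-1/3})$, hence $|s-1| = O(r^{-1/6})$; substituting $s=1$ into the asymptotic then yields $\max_s N(r,s) = r^2\area(\Gamma) - rL + O(r^{2/3})$. The main obstacle is the uniform $O(r^{2/3})$ remainder itself: the van der Corput analysis must be carried out with a constant independent of $s$ over the compact range $[s_1,s_2]$, and the monotonicity of $f''$ and $g''$ together with their strict negativity on the end segments is exactly what permits this. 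Allowing curvature to degenerate or blow up at the intercepts (as occurs for $p$-ellipses) is the more delicate case postponed to \autoref{th:S_limit_general}.
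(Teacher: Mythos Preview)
Your proposal is correct and follows essentially the same route as the paper: invoke \autoref{thm:s_bounded} to confine $S(r)$ to a fixed compact interval, invoke \autoref{th:asymptotic} for the uniform two-term asymptotic $N(r,s)=r^2\area(\Gamma)-\tfrac{rL}{2}(s+s^{-1})+O(r^{2/3})$, and then compare $N(r,s)$ with $N(r,1)$ to force $(s-1)^2=O(r^{-1/3})$; the paper packages this last comparison step as \autoref{prop:unified}.

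One small correction to your informal sketch of how \autoref{th:asymptotic} itself is proved: there is no ``middle block.'' Since $(\alpha,\beta)$ lies \emph{on} $\Gamma$, the region under the curve splits exactly into the part with $0<x\leq\alpha$ (sawtooth sum handled via $f$ and \autoref{lemma:kratzel}), the part with $0<y\leq\beta$ (handled via $g$), and the rectangle $(0,\alpha]\times(0,\beta]$ counted exactly as $\lfloor\alpha\rfloor\lfloor\beta\rfloor$. A trivial $O(r)$ middle contribution would in fact be too large for the claimed $O(r^{2/3})$ error, and no dyadic decomposition is needed---the van der Corput bound applies in one shot over each full range.
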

The theorem is proved in \autoref{sec:mainproof}. Slight improvements to the decay rate $O(r^{-1/6})$ and the error term $O(r^{2/3})$ are possible, as explained after \autoref{th:asymptotic}.  

\subsection*{More general curves for lattice counting}
We want to weaken the smoothness and monotonicity assumptions in \autoref{th:S_limit}. We start with a definition of piecewise smoothness. 
\begin{definition}[$PC^2$]\ 

(i) We say a function $f$ is piecewise $C^2$-smooth on a half-open interval $(0,\alpha]$ if $f$ is continuous and a partition $0=\alpha_0<\alpha_1 < \dots < \alpha_l = \alpha$ exists such that $f \in C^2(0,\alpha_1]$ and $f \in C^2[\alpha_{i-1},\alpha_i]$ for $i=2,\dots,l$. Write $PC^2(0,\alpha]$ for the class of such functions. 

(ii) Write $f^\prime<0$ to mean that $f^\prime$ is negative on the subintervals $(0,\alpha_1]$ and $[\alpha_{i-1},\alpha_i]$ for $i=2,\dots,l$, with the derivative being taken in the one-sided senses at the partition points $\alpha_1,\dots,\alpha_l$. The meaning of $f^{\prime \prime}<0$ is analogous.

(iii) We label partition points using the same letter as for the right endpoint. In particular, the partition for $g \in PC^2(0,\beta]$ is $0=\beta_0<\dots<\beta_\ell=\beta$.
\end{definition}

For the next theorem, take a point $(\alpha ,\beta) \in \Gamma$ lying in the first quadrant and suppose we have numbers $a_1,a_2,b_1,b_2>0$ and positive valued functions $\delta(r)$ and $\epsilon(r)$ such that as $r \to \infty$: 
\begin{align}
\delta(r) = O(r^{-2a_1}) , \qquad & f''\big(\delta(r)\big)^{-1} = O(r^{1-4a_2}) , \label{eq:f_sup}\\
\epsilon(r) = O(r^{-2b_1}) , \qquad & g''\big(\epsilon(r)\big)^{-1} = O(r^{1-4b_2})  . \label{eq:g_sup}
\end{align}
(The second condition in \autoref{eq:f_sup} says that $f''(x)$ cannot be too small as $x \to 0$.)  
Let
\[
e=\min \{ \tfrac{1}{6},a_1,a_2,b_1,b_2 \} .
\]
Now we extend \autoref{th:S_limit} to a larger class of concave decreasing curves. 
\begin{theorem}[Optimal concave curve is asymptotically balanced]\label{th:S_limit_general}\ 

\noindent Assume $f\in PC^2(0,\alpha]$ with $f'<0$ and $f''<0$, and $f^{\prime \prime}$ is monotonic on each subinterval of the partition. Similarly assume $g\in PC^2(0,\beta]$ with $g'<0$ and $g''<0$, and $g^{\prime \prime}$ is monotonic on each subinterval of the partition. Suppose the positive functions $\delta(r)$ and $\epsilon(r)$ satisfy conditions \autoref{eq:f_sup} and \autoref{eq:g_sup}. 

Then the optimal stretch factor for maximizing $N(r,s)$ approaches $1$ as $r$ tends to infinity, with
\[
S(r) \subset \big[ 1-O(r^{-e}),1+O(r^{-e}) \big] ,
\]
and the maximal lattice count has asymptotic formula 
\[
\max_{s > 0} N(r,s) = r^2\area (\Gamma)-rL + O(r^{1-2e}) .
\]
\end{theorem}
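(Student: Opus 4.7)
The strategy is to obtain, uniformly in $s$ over the compact range given by \autoref{thm:s_bounded}, the two-term lattice-counting asymptotic
\[
N(r,s) = r^2 \area(\Gamma) - \tfrac{rL}{2}(s^{-1}+s) + O(r^{1-2e}) ,
\]
and then deduce both conclusions by comparing $N(r,s)$ with $N(r,1)$ and exploiting the fact that $s+s^{-1}$ attains its minimum at $s = 1$.

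To establish this asymptotic, I would first invoke \autoref{thm:s_bounded} to restrict attention to $s \in [s_1,s_2]$. For such $s$ the stretched curve $\Gamma(s)$ has intercepts at $s^{-1}L$ and $sL$, encloses the same area as $\Gamma$ (the diagonal stretch has determinant $1$), and has derivatives $s^2 f'(sx)$ and $s^3 f''(sx)$. The piecewise-$C^2$ structure, the signs of $f', g', f'', g''$, the monotonicity of $f''$ and $g''$ on each subinterval, and the truncation conditions \eqref{eq:f_sup}, \eqref{eq:g_sup} all transfer to $\Gamma(s)$ uniformly in $s \in [s_1,s_2]$, up to bounded multiplicative constants in the big-$O$ estimates. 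Applying \autoref{th:asymptotic} to $r\Gamma(s)$ then produces the displayed asymptotic with an error bound uniform in $s$.

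Once this is in hand, the rest is algebraic. Setting $s=1$ gives $N(r,1) = r^2 \area(\Gamma) - rL + O(r^{1-2e})$. For any $s \in S(r)$ we have $N(r,s) \geq N(r,1)$, so subtracting the two asymptotics and using the identity $s^{-1}+s-2 = (s-1)^2/s$ together with the bound $s \in [s_1,s_2]$ yields $(s-1)^2 = O(r^{-2e})$, hence $S(r) \subset [1-O(r^{-e}),1+O(r^{-e})]$. Substituting $s^{-1}+s-2 = O(r^{-2e})$ back into the expansion of $N(r,s)$ for $s \in S(r)$ then gives the stated formula $\max_s N(r,s) = r^2 \area(\Gamma) - rL + O(r^{1-2e})$.

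The main obstacle is securing uniform validity of \autoref{th:asymptotic} on $[s_1,s_2]$ under the weaker piecewise-$C^2$ hypotheses that allow $f''$ or $g''$ to blow up or degenerate at the intercept points. The cutoffs $\delta(r)$ and $\epsilon(r)$ in \eqref{eq:f_sup}, \eqref{eq:g_sup} are the device for this: on the boundary strips $x<\delta(r)$ and $y<\epsilon(r)$ the lattice count should be estimated trivially by strip area, whose size is controlled by the decay of $\delta$ and $\epsilon$; on the remaining region the curvature lower bounds $f''(\delta(r))^{-1} = O(r^{1-4a_2})$ and $g''(\epsilon(r))^{-1}=O(r^{1-4b_2})$ enable the van der Corput exponential-sum bound (\autoref{app-exponential}) to deliver an $O(r^{1-2e})$ error. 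Verifying that these two regimes glue together with constants uniform in $s \in [s_1,s_2]$ is the technical heart of the argument.
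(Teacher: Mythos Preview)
Your plan is correct and follows the paper's approach: invoke \autoref{thm:s_bounded} to confine $s$ to a compact interval, establish the two-term expansion $N(r,s)=r^2\area(\Gamma)-\tfrac{rL}{2}(s+s^{-1})+O(r^{1-2e})$ uniformly there, and then compare with $s=1$ (the paper packages this comparison as \autoref{prop:unified}, which you reprove inline via the identity $s+s^{-1}-2=(s-1)^2/s$).

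One correction: the counting estimate you need is \autoref{th:asymptotic_general}, not \autoref{th:asymptotic}. The latter requires $f\in C^2[0,\alpha]$ with $f''$ monotonic on the whole interval and bounded away from zero, which the present hypotheses do not give. What you describe in your final paragraph---trivially bounding $|\psi|\le 1/2$ on $(0,\delta(r)]$ and applying van der Corput on each partition subinterval of $[\delta(r),\alpha]$---is exactly the content of \autoref{th:asymptotic_general}, so the ``main obstacle'' is already dispatched; you should simply cite that result and then check term by term that the right side of \eqref{eq:gamma_a_asymptotic_general} is $O(r^{1-2e})$ uniformly for $s\in[s_1,s_2]$, using \eqref{eq:f_sup} and \eqref{eq:g_sup}. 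That bookkeeping is the actual proof in the paper.
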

The proof is presented in \autoref{sec:generalproof}. 
\begin{example}[Optimal $p$-ellipses for lattice point counting]\rm \label{ex:p-ellipse}
Fix $1<p<\infty$, and consider the $p$-circle
\[
\Gamma : |x|^p+|y|^p=1 ,
\]
which has intercept $L=1$. That is, the $p$-circle is the unit circle for the $\ell^p$-norm on the plane. Then the $p$-ellipse
\[
r\Gamma(s) : |sx|^p + |s^{-1}y|^p \leq r^p
\]
has first-quadrant counting function 
\[
N(r,s) = \# \{ (j,k) \in \N \times \N :  (js)^p+(ks^{-1})^p \leq r^p \} .
\]

We will show that the $p$-ellipse containing the maximum number of positive-integer lattice points must approach a $p$-circle in the limit as $r \to \infty$, with 
\[
S(r) \subset [1-O(r^{-e}),1+O(r^{-e})] 
\]
where $e=\min \{ \tfrac{1}{6},\tfrac{1}{2p} \}$.

\autoref{th:S_limit} fails to apply to $p$-ellipses when $1<p <2$, because the second derivative of the curve  is not monotonic (see $f^{\prime \prime}(x)$ below), and the theorem fails to apply when $2<p<\infty$ because $f^{\prime \prime}(0)=0$ in that case. Instead we will apply \autoref{th:S_limit_general}. 

To verify that the $p$-circle satisfies the hypotheses of \autoref{th:S_limit_general}, we let $\alpha=\beta=2^{-1/p}$ and choose 
\[
\delta(r)=r^{-1/p} , \qquad \epsilon(r)=r^{-1/p} ,
\]
for all large $r$. Then $\delta(r)=r^{-2a_1}$ with $a_1=1/2p$. Next, 
\begin{align*}
f(x) & = (1-x^p)^{1/p} , \\
f'(x) & = -x^{p-1} (1-x^p)^{-1+1/p} , \\
f''(x) & =-(p-1)x^{p-2}(1-x^p)^{-2+1/p} ,
\end{align*}
so that
\[
\big| f''\big(\delta(r)\big) \big|^{-1} \leq (\text{const.}) r^{1-2/p} ,
\]
and hence $a_2=1/2p$ in \autoref{eq:f_sup}. Thus $f$ satisfies hypothesis \autoref{eq:f_sup}. 

Further, the interval $(0,\alpha)$ can be partitioned into subintervals on which $f''$ is monotonic, because the third derivative
\[
f'''(x) = -(p-1) x^{p-3} (1 - x^p)^{-3 + 1/p} \big( (1+p)x^p + p-2 \big)
\]
vanishes at most once in the unit interval. 

The calculations are the same for $g$, and so the desired conclusion for $p$-ellipses follows from \autoref{th:S_limit_general} when $1<p<\infty$. 

For $p=\infty$, the $\infty$-circle is a Euclidean square and the $\infty$-ellipse is a rectangle. Many different rectangles of given area can contain the same number of lattice points. For example, a $4 \times 1$ rectangle and $2 \times 2$ square each contain $4$ lattice points in the first quadrant. All such matters can be handled by the explicit formula $N(r,s)=\lfloor rs^{-1} \rfloor \lfloor rs \rfloor$ for the counting function when $p=\infty$.

The case $p=1$ is an open problem, as discussed in \autoref{sec:oneellipse}. The case $0<p<1$ has been handled by Ariturk and Laugesen \cite{AL17} using results in this paper.

Incidentally, an explicit estimate on the number of lattice points in the full $p$-ellipse in all four quadrants was obtained by Kr\"{a}tzel \cite[Theorem 2]{kratzel04} for $p \geq 2$. See the informative survey by Ivi\'c \emph{et al.}\ \cite[{\S}3.1]{IKKN06}.
\end{example}

\subsection*{Lattice points in the closed first quadrant, and Neumann eigenvalues} Our results have analogues for lattice point counting in the closed (rather than open) first quadrant, as we now explain. When counting nonnegative-integer lattice points, which means we include lattice points on the axes, the counting function for $r\Gamma(s)$ is
\begin{align*}
\mathcal{N}(r,s) =\#\{(j,k)\in\mathbb{Z}_{+} \times \mathbb{Z}_{+}:k\leq rsf(js/r)\} ,
\end{align*}
where $\mathbb{Z}_+ = \{0,1,2,3,\ldots\}$. Define 
\[
\mathcal{S}(r) = \argmin_{s >0} \mathcal{N}(r,s) . 
\]
In other words, the set $\mathcal{S}(r)$ consists of the $s$-values that minimize the number of lattice points inside the curve $r\Gamma(s)$ in the \emph{closed} first quadrant. Notice we employ the calligraphic letters $\mathcal N$ and $\mathcal S$ when working with nonnegative-integer lattice points. 

\begin{theorem}[Uniform bound on optimal stretch factors] \label{thm:s_bounded_neumann}
If $\Gamma$ is a concave, strictly decreasing curve in the first quadrant then 
\[
\mathcal{S}(r) \subset [s_1,s_2] \qquad \text{for all $r \geq 2/L$,}
\]
for some constants $s_1,s_2>0$.
\end{theorem}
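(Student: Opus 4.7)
The plan is to parallel the Dirichlet argument used for \autoref{thm:s_bounded} (whose proof appears in \autoref{sec:boundedness}), adapted to the closed-quadrant counting function. I would begin by decomposing
\[
\mathcal{N}(r,s) = N(r,s) + \lfloor rL/s \rfloor + \lfloor rsL \rfloor + 1,
\]
which isolates the interior positive-integer count from the exact contributions of the two coordinate axes (with the origin counted once). This reduces the minimization problem to controlling $N(r,s) + rL(s+s^{-1})$ against $N(r,1) + 2rL$.

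For the comparison I would produce two bounds. A standard Riemann-sum estimate, using only that $f$ is decreasing, gives the upper bound $\mathcal{N}(r,1) \leq r^2\area(\Gamma) + 2rL + 1$. For the matching lower bound on $\mathcal{N}(r,s)$ I would invoke the non-sharp counting inequality \autoref{prop:counting_ineq} (the main technical tool already developed for \autoref{thm:s_bounded}), which I expect to deliver an estimate of the form
\[
N(r,s) \geq r^2 \area(\Gamma) - \tfrac{rL}{2}(s + s^{-1}) - Cr
\]
for a constant $C = C(\Gamma)$ independent of $s$. Combined with the trivial floor bounds $\lfloor rL/s\rfloor + \lfloor rsL\rfloor \geq rL(s+s^{-1}) - 2$, this yields the critical estimate
\[
\mathcal{N}(r,s) \geq r^2\area(\Gamma) + \tfrac{rL}{2}(s+s^{-1}) - Cr - 1.
\]
For any $s \in \mathcal{S}(r)$, the inequality $\mathcal{N}(r,s) \leq \mathcal{N}(r,1)$ together with the two displayed bounds forces $s + s^{-1} \leq 4 + 2C/L + 4/(rL)$, which is uniformly bounded for $r \geq 2/L$ and therefore confines $\mathcal{S}(r)$ to a compact interval $[s_1,s_2]$ depending only on $\Gamma$.

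The main obstacle is securing the coefficient $\tfrac{1}{2}$ in the lower bound on $N(r,s)$. A naive Riemann sum using only monotonicity of $f$ gives coefficient $1$ in front of $rL(s+s^{-1})$, which cancels exactly against the axis contributions and collapses the combined estimate to the useless $\mathcal{N}(r,s) \geq r^2\area(\Gamma) - O(1)$, too weak to rule out unbounded $s$. The improved coefficient reflects the heuristic that only half of each axis ``leaks'' out of the interior count for a convex region, and genuinely uses concavity of $\Gamma$. Extracting this is exactly what \autoref{prop:counting_ineq} is designed to do, so no new counting technology beyond the Dirichlet analysis is required; the Neumann uniform bound is obtained by reassembling that proposition with the explicit axis contributions.
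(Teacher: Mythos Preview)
Your plan has a genuine gap at the central step. \autoref{prop:counting_ineq} is an \emph{upper} bound on $N(r,s)$, not a lower bound; it says $N(r,s) \leq r^2\area(\Gamma) - \tfrac{1}{2}Crs$. You cannot invoke it to produce the inequality $N(r,s) \geq r^2\area(\Gamma) - \tfrac{rL}{2}(s+s^{-1}) - Cr$ that your argument needs. Worse, the lower bound you want is actually \emph{false} for large $s$: take the circle ($L=1$) with $s=r/2$, so the $x$-intercept of $r\Gamma(s)$ is $2$ and $N(r,s) = \lfloor \tfrac{r^2}{2}f(\tfrac12)\rfloor \approx \tfrac{\sqrt{3}}{4}r^2 \approx 0.433\,r^2$, whereas your right-hand side is $r^2(\tfrac{\pi}{4}-\tfrac14) - Cr \approx 0.535\,r^2 - Cr$. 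No fixed $C$ rescues this for large $r$. The same computation shows your combined bound $\mathcal{N}(r,s) \geq r^2\area(\Gamma) + \tfrac{rL}{2}(s+s^{-1}) - Cr - 1$ also fails for $s$ of order $r$, which is precisely the regime you must rule out.

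What the paper actually does is prove a \emph{new} lower bound, \autoref{prop:counting_ineq_neumann}: $\mathcal{N}(r,s) \geq r^2\area(\Gamma) + \tfrac{1}{2}\mathcal{C}rs$ with $\mathcal{C}=M-f(L/4)$. The geometry is dual to \autoref{prop:counting_ineq}: instead of secant triangles lying \emph{below} $\Gamma$ and above the upper-right-vertex squares (giving an upper bound on $N$), one uses tangent triangles lying \emph{above} $\Gamma$ and inside the lower-left-vertex squares (giving a lower bound on $\mathcal{N}$). Two features are essential. First, the constant is $\mathcal{C}<L$, not $L$; second, only $rs$ appears, not $r(s+s^{-1})$. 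Both weakenings are what allow the inequality to hold for \emph{all} $s>0$ without any a~priori bound. Combined with $\mathcal{N}(r,1)\leq r^2\area(\Gamma)+2rL+1$ and the minimality $\mathcal{N}(r,s)\leq\mathcal{N}(r,1)$, this forces $s\leq 5L/\mathcal{C}$; symmetry then bounds $s^{-1}$. So new counting technology \emph{is} required, contrary to your closing claim --- but it is a natural mirror image of the Dirichlet argument rather than a different idea altogether.
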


\begin{theorem}[Optimal concave curve is asymptotically balanced]\label{th:S_limit_neumann}
Under the assumptions of \autoref{th:S_limit}, the optimal stretch factor for minimizing $\mathcal{N}(r,s)$ approaches $1$ as $r$ tends to infinity:
\begin{align*}
\mathcal{S}(r) & \subset [1-O(r^{-1/6}),1+O(r^{-1/6})] , \\
\min_{s > 0} \mathcal{N}(r,s) & = r^2\area (\Gamma) + rL + O(r^{2/3}) ,
\end{align*}
and under the assumptions of \autoref{th:S_limit_general} we have similarly that:
\begin{align*}
\mathcal{S}(r) & \subset [1-O(r^{-e}),1+O(r^{-e})] , \\
\min_{s > 0} \mathcal{N}(r,s) & = r^2\area (\Gamma) + rL + O(r^{1-2e}) .
\end{align*}
\end{theorem}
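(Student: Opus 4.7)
The strategy is to reduce the Neumann-type minimization to the Dirichlet-type maximization via the elementary partition identity
\[
\mathcal{N}(r,s) = N(r,s) + \lfloor rs^{-1}L \rfloor + \lfloor rsL \rfloor + 1,
\]
obtained by counting separately the origin, the positive $x$-axis lattice points $(j,0)$ with $1 \leq j \leq rs^{-1}L$, the positive $y$-axis lattice points $(0,k)$ with $1 \leq k \leq rsL$, and the strictly interior points enumerated by $N(r,s)$.

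Given \autoref{thm:s_bounded_neumann}, I may restrict attention to a compact interval $s \in [s_1,s_2]$ independent of $r$. On this range I would invoke the same uniform Kr\"{a}tzel-type asymptotic (\autoref{th:asymptotic}) that drives the proofs of \autoref{th:S_limit} and \autoref{th:S_limit_general}. Under the hypotheses of \autoref{th:S_limit} this gives
\[
N(r,s) = r^2 \area(\Gamma) - \frac{rL}{2}(s + s^{-1}) + O(r^{2/3})
\]
uniformly in $s$, with the analogous expression and error $O(r^{1-2e})$ under the hypotheses of \autoref{th:S_limit_general}. Substituting into the partition identity and using $\lfloor rs^{\pm 1}L \rfloor = rs^{\pm 1}L + O(1)$ produces the sign-flipped expansion
\[
\mathcal{N}(r,s) = r^2 \area(\Gamma) + \frac{rL}{2}(s + s^{-1}) + O(r^{2/3}).
\]
The essential point is that the same boundary correction that made $N$ (approximately) concave in $s$ about $s=1$ now makes $\mathcal{N}$ strictly \emph{convex} about $s=1$: the function $\tfrac{1}{2}(s+s^{-1})$ attains its unique minimum, value $1$, at $s=1$, with quadratic behavior $\tfrac{1}{2}(s+s^{-1}) - 1 = (s-1)^2/(2s)$.

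To conclude, evaluating at $s=1$ gives the upper bound $\min_s \mathcal{N}(r,s) \leq r^2 \area(\Gamma) + rL + O(r^{2/3})$. Conversely, for any minimizer $s_0 \in \mathcal{S}(r) \subset [s_1,s_2]$, the quadratic lower bound $\tfrac{1}{2}(s+s^{-1})-1 \geq c(s-1)^2$ on the bounded range yields
\[
\mathcal{N}(r,s_0) - \mathcal{N}(r,1) \geq c\,rL\,(s_0 - 1)^2 - Cr^{2/3},
\]
and minimality forces the right-hand side to be $\leq 0$, so $|s_0 - 1| = O(r^{-1/6})$. Substituting this back into the expansion recovers the claimed asymptotic. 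The \autoref{th:S_limit_general} version is structurally identical, with $r^{2/3}$ replaced by $r^{1-2e}$ and $r^{-1/6}$ by $r^{-e}$.

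The main obstacle I anticipate is verifying that the Kr\"{a}tzel-type expansion built inside the proof of \autoref{th:S_limit} actually holds \emph{uniformly} in $s$ over the bounded interval from \autoref{thm:s_bounded_neumann}, rather than merely at a moving maximizer. This uniformity is the heart of the reduction: once it is in hand, the Neumann minimization reduces to a clean quadratic comparison, and the rate $r^{-1/6}$ (or $r^{-e}$ in the more general setting) emerges as the natural threshold at which the convex signal $rL(s-1)^2$ first overtakes the $O(r^{2/3})$ (resp.\ $O(r^{1-2e})$) error.
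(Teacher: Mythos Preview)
Your proposal is correct and matches the paper's approach essentially line for line: the paper combines \autoref{le:relation} (your partition identity) with the uniform-in-$s$ asymptotic \autoref{eq:two-term-ineqN} or \autoref{eq:two-term-ineqNgeneral} to obtain the sign-flipped expansion for $\mathcal{N}(r,s)$, then feeds this into \autoref{prop:unified} (applied to $H=-\mathcal{N}$), which is exactly your quadratic comparison argument packaged as a lemma. Your uniformity concern is already handled by \autoref{th:asymptotic}(b), whose right side depends explicitly on $s$ and is $O(r^{2/3})$ once $s$ is confined to a compact interval.
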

The proofs are in \autoref{sec:closedquadrant}. 

Consequently, we reprove a recent theorem of van den Berg, Bucur and Gittins \cite{BBG16a} saying that the optimal rectangle of area $1$ for maximizing the $n$-th Neumann eigenvalue of the Laplacian approaches a square as $n \to \infty$. See \autoref{sec:relation} for discussion, and an explanation of why the approach in this paper is simpler.

\section{\bf Proof of \autoref{thm:s_bounded}}
\label{sec:boundedness}
 
To control the stretch factors and hence prove \autoref{thm:s_bounded}, we will first derive a rough lower bound on the counting function, and then a more sophisticated upper bound. The leading order term in these bounds is simply the area inside the rescaled curve and thus is best possible, while the second term scales like the length of the curve and so at least has the correct order of magnitude. 

Assume $\Gamma$ is concave and decreasing in the first quadrant, with $x$- and $y$-intercepts at $L$ and $M$ respectively. The intercepts need not be equal, in the lemmas and proposition below. Recall that $N(r,s)$ counts the positive-integer lattice points under the curve $\Gamma$, while $\mathcal{N}(r,s)$ counts nonnegative-integer lattice points. 
\begin{lemma}[Relation between counting functions] \label{le:relation}
For each $r,s>0$, 
\[
\mathcal{N}(r,s)=N(r,s)+r(s^{-1}L+sM ) + \rho(r,s)
\]
for some number $\rho(r,s) \in [-1,1]$.
\end{lemma}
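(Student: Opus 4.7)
The plan is to observe that the only difference between $\mathcal N(r,s)$ and $N(r,s)$ is due to lattice points lying on one of the two coordinate axes. First I would identify these axial contributions exactly, and then convert floor functions into the desired linear expression plus a bounded remainder.

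More concretely, the rescaled curve $r\Gamma(s)$ meets the $x$-axis at $rs^{-1}L$ and the $y$-axis at $rsM$. Because $\Gamma$ is decreasing and concave and passes through these intercepts, the lattice points in the closed first quadrant that lie on an axis and inside $r\Gamma(s)$ are exactly: the origin $(0,0)$; the points $(j,0)$ with $1\le j \le rs^{-1}L$, numbering $\lfloor rs^{-1}L \rfloor$; and the points $(0,k)$ with $1 \le k \le rsM$, numbering $\lfloor rsM \rfloor$. Hence
\[
\mathcal N(r,s) - N(r,s) = 1 + \lfloor rs^{-1}L \rfloor + \lfloor rsM \rfloor .
\]

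Next I would write $\lfloor x \rfloor = x - \{x\}$ for $x \ge 0$, which yields
\[
\mathcal N(r,s) = N(r,s) + r(s^{-1}L + sM) + \bigl( 1 - \{rs^{-1}L\} - \{rsM\} \bigr).
\]
Setting $\rho(r,s) = 1 - \{rs^{-1}L\} - \{rsM\}$ gives the claimed identity. Since each fractional part lies in $[0,1)$, their sum lies in $[0,2)$, and so $\rho(r,s) \in (-1, 1] \subset [-1,1]$.

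There is no real obstacle here: the statement is essentially bookkeeping, and the only subtlety is making sure the axial count is exact (counting the origin exactly once, and correctly using $\lfloor \cdot \rfloor$ for the integer points on each positive axis). The only minor care needed is to confirm, using the concavity and monotonicity of $\Gamma$, that every axial lattice point not exceeding the corresponding intercept does lie inside $r\Gamma(s)$, which is immediate because $r\Gamma(s)$ together with the axes bounds a star-shaped (indeed convex) region containing the axial segments up to the intercepts.
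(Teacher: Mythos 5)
Your proof is correct and takes essentially the same approach as the paper: both observe that $\mathcal N(r,s) - N(r,s)$ equals the number of axial lattice points inside $r\Gamma(s)$, namely $\lfloor rs^{-1}L \rfloor + \lfloor rsM \rfloor + 1$, from which the identity and the bound on $\rho$ follow. Your extra step of rewriting the floors via fractional parts and pinning $\rho$ to $(-1,1]$ is a slightly more explicit version of what the paper leaves implicit.
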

\begin{proof}
The difference between the two counting functions is simply the number of lattice points lying on the coordinate axes inside the intercepts of $r\Gamma(s)$. There are 
\[
\lfloor rs^{-1}L \rfloor + \lfloor rsM \rfloor +1
\]
such lattice points, and so the lemma follows immediately. 
\end{proof}
\begin{lemma}[Rough lower bound] \label{le:basic}
The number $N(r,s)$ of positive-integer lattice points lying inside $r\Gamma(s)$ in the first quadrant satisfies
\[
N(r,s) \geq r^2\area(\Gamma) - r(s^{-1}L+sM) - 1 , \qquad r,s > 0 .
\]
\end{lemma}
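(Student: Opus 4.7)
The plan is to show that $N(r,s)$ is approximately the area enclosed by $r\Gamma(s)$ and the axes, with the error coming from two boundary strips of width $1$. Let
\[
R = \{(x,y) : 0 \leq x, \ 0 \leq y \leq rsf(sx/r)\},
\]
a region of area $r^2 \area(\Gamma)$, since the diagonal stretch $(s^{-1},s)$ preserves area and the radial $r$-scaling multiplies area by $r^2$. I will estimate $N(r,s)$ from below by comparing the floor image of $R$ with its positive-integer lattice points.

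The key observation is that the floor map $(x,y) \mapsto (\lfloor x \rfloor, \lfloor y \rfloor)$ sends $R$ into $R$: for $(x,y) \in R$, the monotonicity of $f$ gives
\[
\lfloor y \rfloor \leq y \leq rsf(sx/r) \leq rsf(s\lfloor x \rfloor / r),
\]
the final inequality because $\lfloor x \rfloor \leq x$ and $f$ is decreasing. Restricting this map to $R \cap \{x \geq 1,\, y \geq 1\}$ therefore produces positive-integer lattice points in $R$, each of which is counted by $N(r,s)$. Since the preimage of a given lattice point $(j,k)$ sits inside the unit square $[j, j+1) \times [k, k+1)$, summing the areas of the preimages yields
\[
\area\!\big(R \cap \{x \geq 1, y \geq 1\}\big) \leq N(r,s).
\]

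The rest is bookkeeping. The strip $R \cap \{x < 1\}$ has area at most $rsM$ (width $1$ times the maximum height $rsM = rsf(0)$, attained at $x=0$), and symmetrically $R \cap \{y < 1\}$ has area at most $rs^{-1}L$, the $x$-intercept of $r\Gamma(s)$. Subtracting these from $\area R = r^2 \area(\Gamma)$ gives
\[
N(r,s) \geq r^2 \area(\Gamma) - r(s^{-1}L + sM),
\]
which is a slight strengthening of the stated bound; the ``$-1$'' in the lemma is harmless slack. The only nonroutine step is the floor-map observation, which is precisely where the concave decreasing hypothesis on $\Gamma$ enters; everything else is elementary area accounting.
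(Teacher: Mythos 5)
Your proof is correct, and it is a slight strengthening of the stated lemma (you get the bound without the ``$-1$''). The underlying geometric observation is the same as the paper's: since $f$ is decreasing, the region $R$ under $r\Gamma(s)$ is covered by the unit squares whose lower-left corners are the integer lattice points of $R$ (equivalently, in your phrasing, the floor map sends $R$ into $R$, and its fibers are exactly those unit squares). Where you diverge is in the bookkeeping. The paper first deduces $\mathcal{N}(r,s) \geq r^2\area(\Gamma)$ for the \emph{closed}-quadrant count, and then passes to $N(r,s)$ via Lemma~\ref{le:relation}, which costs $r(s^{-1}L+sM)+1$ by counting the axis lattice points exactly. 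You instead work directly with $N(r,s)$, restrict the floor map to $\{x\geq 1,\ y\geq 1\}$, and subtract the areas of the two boundary strips $R\cap\{x<1\}$ and $R\cap\{y<1\}$, which you bound by $rsM$ and $rs^{-1}L$. This is a genuinely different decomposition: it bypasses Lemma~\ref{le:relation} altogether and trades the exact axis-point count for a pure area bound on the strips, yielding a cleaner constant. One small imprecision in your commentary: the floor-map step uses only that $f$ is decreasing, not concavity; concavity plays no role in this lemma (and indeed the paper also cites only the decreasing property here).
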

\begin{proof}
Notice $\mathcal{N}(r,s)$ equals the total area of the squares of sidelength $1$ having lower left vertices at nonnegative-integer lattice points inside the curve $r\Gamma(s)$. The union of these squares contains $r\Gamma(s)$, since the curve is decreasing. Hence $\mathcal{N}(r,s) \geq r^2\area(\Gamma)$, and so 
\begin{align*}
N(r,s)
& \geq \mathcal{N}(r,s) - r(s^{-1}L+sM ) - 1 \qquad \text{by \autoref{le:relation}} \\
& \geq r^2\area(\Gamma) - r(s^{-1}L+sM ) - 1 .
\end{align*}
\end{proof}

For the upper bound in the next proposition, remember $\Gamma$ is the graph of $y=f(x)$, where $f$ is concave and decreasing on $[0,L]$, with $f(0)=M, f(L)=0$. We do not assume $f$ is differentiable in the next result, although in order to guarantee the constant $C$ in the proposition is positive, we assume $f$ is \emph{strictly} decreasing. 
\begin{proposition}[Two-term upper bound on counting function]\label{prop:counting_ineq} 
Let $C=M-f(L/2)$.

\noindent (a) The number $N$ of positive-integer lattice points lying inside $\Gamma$ in the first quadrant satisfies
\begin{equation}
N \leq \area(\Gamma) - \frac{1}{2} C
\end{equation}
provided $L \geq 1$.

\noindent (b) The number $N(r,s)$ of positive-integer lattice points lying inside $r\Gamma(s)$ in the first quadrant satisfies
\[
N(r,s)\leq r^2\area(\Gamma) - \frac{1}{2}Crs
\]
whenever $r \geq s/L$.
\end{proposition}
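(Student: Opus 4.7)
The plan is to prove part~(a) directly by a staircase area comparison and then deduce part~(b) by rescaling. For~(a), I would associate to each positive-integer lattice point $(j,k)$ with $k \le f(j)$ the closed unit square $S_{j,k} = [j-1,j] \times [k-1,k]$. Because $f$ is decreasing, the top edge of $S_{j,k}$ sits at height $k \le f(j) \le f(x)$ for every $x \in [j-1,j]$, so $S_{j,k}$ lies beneath the graph of $f$, and distinct such squares are disjoint. Hence $N$ equals the total area of the staircase, and $\area(\Gamma) - N$ is the area under $\Gamma$ left uncovered. I would then split this uncovered area column by column: in the vertical strip $x \in [j-1,j]$ with $1 \le j \le \lfloor L \rfloor$ the contribution is
\[
U_j = \int_{j-1}^{j} f(x) \ud x - \lfloor f(j) \rfloor ,
\]
and the fractional strip $[\lfloor L \rfloor, L]$ contributes a nonnegative amount that I discard.

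The key step is a lower bound on $U_j$ from concavity of $f$ on $[j-1,j]$. Because a concave function lies above its chord, the trapezoidal inequality yields $\int_{j-1}^{j} f \ud x \ge \tfrac{1}{2}\bigl(f(j-1) + f(j)\bigr)$; combined with $\lfloor f(j) \rfloor \le f(j)$ this gives $U_j \ge \tfrac{1}{2}\bigl(f(j-1) - f(j)\bigr)$. Summing from $j=1$ to $\lfloor L \rfloor$ telescopes to $\tfrac{1}{2}\bigl(M - f(\lfloor L \rfloor)\bigr)$. Now $L \ge 1$ forces $\lfloor L \rfloor \ge L/2$ (immediate when $1 \le L < 2$, where $\lfloor L \rfloor = 1 \ge L/2$, and for $L \ge 2$ via $\lfloor L \rfloor \ge L - 1 \ge L/2$), so the monotonicity of $f$ yields $f(\lfloor L \rfloor) \le f(L/2)$ and hence $\sum_j U_j \ge \tfrac{1}{2}\bigl(M - f(L/2)\bigr) = C/2$, proving~(a).

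For~(b), I would apply (a) to the rescaled curve $r\Gamma(s)$, which is the graph of $F(\xi) = rs f(s\xi/r)$ on $[0, rL/s]$. A change of variables gives $\area(r\Gamma(s)) = r^2 \area(\Gamma)$; the new $y$-intercept is $F(0) = rsM$; and the new value of ``$C$'' is $F(0) - F\bigl(\tfrac{1}{2} rL/s\bigr) = rs\bigl(M - f(L/2)\bigr) = rsC$. The hypothesis $r \ge s/L$ is exactly the condition $rL/s \ge 1$ needed to invoke~(a), and the resulting bound reads $N(r,s) \le r^2 \area(\Gamma) - \tfrac{1}{2} Crs$. I do not foresee a real obstacle; the only mild subtlety is the inequality $\lfloor L \rfloor \ge L/2$ for $L \in [1,2)$, which is why the hypothesis in~(a) is $L \ge 1$ rather than $L \ge 2$.
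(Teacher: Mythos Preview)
Your proof is correct and follows essentially the same route as the paper's: both associate unit squares with upper-right vertices at the lattice points, bound the uncovered area column by column via concavity, obtain the telescoping sum $\tfrac{1}{2}\bigl(f(0)-f(\lfloor L\rfloor)\bigr)$, and then use $\lfloor L\rfloor\ge L/2$ for $L\ge1$. The only cosmetic difference is that the paper phrases the concavity step geometrically (secant-line triangles fit below $\Gamma$ and above the staircase) whereas you phrase it analytically via the trapezoidal inequality; these are the same estimate, and Part~(b) is handled identically by rescaling.
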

\begin{proof}
\begin{figure}
\includegraphics[scale=0.4]{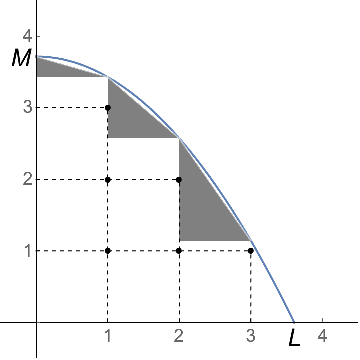}
\caption{\label{fig:dirichlet_counting}Positive integer lattice count satisfies $N \leq \area(\Gamma) - \area(\text{triangles})$, in proof of \autoref{prop:counting_ineq}(a).}
\end{figure}

Part (a). 
Clearly $N$ equals the total area of the squares of sidelength $1$ having upper right vertices at positive-integer lattice points inside the curve $\Gamma$. Consider also the right triangles of width $1$ formed by secant lines on $\Gamma$ (see \autoref{fig:dirichlet_counting}), that is, the triangles with vertices $\big( i-1,f(i-1) \big),\big( i,f(i) \big),\big( i-1,f(i) \big)$, where $i=1,\ldots,\lfloor L \rfloor$. These triangles lie above the squares by construction, and lie below $\Gamma$ by concavity. Hence 
\begin{equation}\label{eq:total_area}
N+\area(\text{triangles})\leq \area(\Gamma) .
\end{equation}
Since $f$ is decreasing, we find
\begin{align}
\area(\text{triangles})&=\sum_{i=1}^{\lfloor L\rfloor} \frac{1}{2}\big(f(i-1) - f(i)\big) \nonumber\\
&=\frac{1}{2}\big(f(0)-f(\lfloor L\rfloor)\big)\label{eq:triangle-area-exact}\\
&\geq \frac{1}{2}\big(M- f(L/2)\big) = \frac{1}{2} C, \label{eq:area}
\end{align}
because $\lfloor L \rfloor \geq L/2$ when $L \geq 1$. Combining \autoref{eq:total_area} and \autoref{eq:area}  proves Part (a).

\smallskip
Part (b). 
Simply replace $\Gamma$ in Part (a) with the curve $r\Gamma(s)$, meaning we replace $L, M, f(x)$ with $rs^{-1}L, rsM, rsf(sx/r)$ respectively.
\end{proof}

\subsection*{Proof of \autoref{thm:s_bounded}} Recall the intercepts are assumed equal ($L=M$) in this theorem. Let $r \geq 2/L$ and suppose $s \in S(r)$. Then $r \geq s/L$ by \autoref{lemma:bound}, and so the upper bound in \autoref{prop:counting_ineq}(b) gives
\[
N(r,s)\leq r^2\area(\Gamma) - \frac{1}{2}Crs .
\]
The lower bound in \autoref{le:basic} with ``$s=1$'' says 
\begin{equation}\label{eq:s-1-lower-bound}
N(r,1) \geq r^2\area(\Gamma) - 2rL - 1 .
\end{equation}

The value $s \in S(r)$ is a maximizing value, and so $N(r,1) \leq N(r,s)$. The preceding inequalities therefore imply
\[
\frac{1}{2}Crs \leq 2rL+1 \leq \frac{5}{2}rL . 
\]
Hence $s \leq 5L/C \equiv s_2$, and so the set $S(r)$ is bounded above.

Interchanging the roles of the horizontal and vertical axes, we similarly find $s^{-1} \leq 5L/\widetilde{C} \equiv s_1^{-1}$,  so that the set $S(r)$ is bounded below away from $0$, completing the first part of the proof. 

The fact that $S(r)$ is bounded will help imply an improved bound in the limit as $r \to \infty$. Going back to the proof of \autoref{prop:counting_ineq}(a), we see from \autoref{eq:total_area} and \autoref{eq:triangle-area-exact} that 
\[
N+\frac{1}{2}\big(f(0)-f(\lfloor L\rfloor) \big) \leq \area(\Gamma).
\]
Rescaling the curve from $\Gamma$ to $r\Gamma(s)$, so that $N$ and $f(x)$ become $N(r,s)$ and $rsf\big(\frac{s}{r}x\big)$, respectively, and the $x$-intercept $L$ becomes $rL/s$, we see the last inequality becomes
\[
N(r,s) + \frac{1}{2} rs\big(f(0)-f(\frac{s}{r}\lfloor \frac{rL}{s}\rfloor)\big)\leq r^2\area(\Gamma).
\]
Hence
\begin{equation*}\label{eq:p_optimal}
N(r,s) \leq r^2\area(\Gamma)-\frac{1}{2}rsL + o(r) \qquad \text{as $r \to \infty$,}
\end{equation*}
where to get the error term $o(r)$ we used that $s \in S(r)$ is bounded above and below ($s_1 \leq s \leq s_2$) and $f(L)=0$.
Since $s$ is a maxmizing value we have $N(r,1)\leq N(r,s)$, and so \autoref{eq:s-1-lower-bound} and the above inequality imply
\[
\frac{1}{2}rsL+o(r) \leq 2rL + 1 ,
\]
which implies $\limsup_{r \to \infty} s \leq 4$. Similarly $\limsup_{r\to \infty} s^{-1} \leq 4$, by interchanging the axes. 

\section{\bf Two-term counting estimates with explicit remainder}

%\subsection*{Basic two-term counting estimate}
We start with a result for $C^2$-smooth curves. What matters in the following proposition is that the right side of estimate \autoref{eq:gamma_a_asymptotic} below has the form $O(r^\theta)$ for some $\theta<1$, and that the $s$-dependence in the estimate can be seen explicitly. The detailed dependence on the functions $f$ and $g$ will not be important for our purposes. 

The horizontal and vertical intercepts $L$ and $M$ need not be equal, in this section. 
\begin{proposition}[Two-term counting estimate]\label{th:asymptotic}
Take a point $(\alpha ,\beta) \in \Gamma$ lying in the first quadrant, and assume that $f \in C^2[0, \alpha]$ with $f'<0$ on $(0,\alpha]$ and $f'' < 0$ on $[0, \alpha]$, and similarly $g \in C^2[0, \beta]$ with $g'<0$ on $(0,\beta]$ and $g'' < 0$ on $[0, \beta]$. Further suppose $f''$ is monotonic on $[0,\alpha]$ and $g''$ is monotonic on $[0,\beta]$. 

\noindent (a) The number $N$ of positive-integer lattice points inside $\Gamma$ in the first quadrant satisfies:
\begin{align*}%\label{eq:gamma_asymptotic}
&\big|N-\area (\Gamma)+(L+M)/2\big|\nonumber\\
&\leq 6\Big(\int_0^\alpha |f''(x)|^{1/3} \ud x+\int_0^\beta |g''(y)|^{1/3}\ud y\Big)  +175\big(\max_{ [0,\alpha]}\frac{1}{|f''|^{1/2}}+\max_{ [0,\beta]}\frac{1}{|g''|^{1/2}}\big)\nonumber\\
& \hspace{4cm}+\frac{1}{4} \big( |f'(\alpha)|+|g'(\beta)| \big)+3 .
\end{align*}

\noindent (b) The number $N(r,s)$ of positive-integer lattice points lying inside $r\Gamma(s)$ in the first quadrant satisfies (for $r,s>0$):
\begin{align}
&\big|N(r,s)-r^2\area (\Gamma)+r(s^{-1}L+sM)/2\big|\nonumber\\
&\leq 6r^{2/3}\Big(\int_0^\alpha |f''(x)|^{1/3} \ud x+\int_0^\beta |g''(y)|^{1/3}\ud y\Big)  +175r^{1/2}\big(\max_{ [0,\alpha]}\frac{s^{-3/2}}{|f''|^{1/2}}+\max_{[0,\beta]}\frac{s^{3/2}}{|g''|^{1/2}}\big)\nonumber\\
& \hspace{4cm}+\frac{1}{4}(s^2|f'(\alpha)|+s^{-2}|g'(\beta)|)+3. \label{eq:gamma_a_asymptotic}
\end{align}
\end{proposition}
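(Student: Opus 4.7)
The plan is to prove Part (a) from first principles and obtain Part (b) by a direct rescaling argument. For the reduction, I note that $r\Gamma(s)$ is the graph of $F(x)=rs\,f(sx/r)$ on $[0,rL/s]$, with $F''(x)=(s^3/r)f''(sx/r)$ and $F'(x)=s^2 f'(sx/r)$. Taking the splitting point to be $(\tilde\alpha,\tilde\beta)=(r\alpha/s,rs\beta)\in r\Gamma(s)$ and substituting $u=sx/r$ turns $\int_0^{\tilde\alpha}|F''|^{1/3}\,dx$ into $r^{2/3}\int_0^\alpha|f''|^{1/3}\,du$ and $\max_{[0,\tilde\alpha]}|F''|^{-1/2}$ into $r^{1/2}s^{-3/2}\max_{[0,\alpha]}|f''|^{-1/2}$, so that Part (a) applied to $r\Gamma(s)$ yields precisely \autoref{eq:gamma_a_asymptotic}. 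Likewise $|F'(\tilde\alpha)|=s^2|f'(\alpha)|$ and the intercepts $rL/s$, $rsM$ produce the $r(s^{-1}L+sM)/2$ main term. So Part (a) is the substance.

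For Part (a), I would begin with the hyperbola-method identity
\[
N \;=\; \sum_{j=1}^{\lfloor\alpha\rfloor}\lfloor f(j)\rfloor \;+\; \sum_{k=1}^{\lfloor\beta\rfloor}\lfloor g(k)\rfloor \;-\; \lfloor\alpha\rfloor\lfloor\beta\rfloor,
\]
obtained by partitioning the lattice points under $\Gamma$ into three classes: those in the rectangle $[1,\lfloor\alpha\rfloor]\times[1,\lfloor\beta\rfloor]$, those with $j>\alpha$ (re-expressed via $g$), and those with $k>\beta$ (expressed via $f$); a harmless perturbation of $(\alpha,\beta)$ disposes of the degenerate case where it is itself a lattice point. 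Writing $\lfloor t\rfloor=t-\tfrac12-\psi(t)$ with $\psi(t)=\{t\}-\tfrac12$ splits each floor into a smooth sum, a halved count, and a sawtooth sum, after which I would convert each smooth sum to an integral by the Euler--Maclaurin identity
\[
\sum_{j=1}^{n} h(j) \;=\; \int_0^n h(x)\,dx \;+\; \tfrac12\bigl(h(n)-h(0)\bigr) \;+\; \int_0^n \psi(x)\,h'(x)\,dx.
\]
The geometric identity $\int_0^\alpha f + \int_0^\beta g = \area(\Gamma)+\alpha\beta$ then combines with $f(0)=M$ and $g(0)=L$ to produce the main term $\area(\Gamma)-(L+M)/2$: the $\alpha\beta$ cancels against $-\lfloor\alpha\rfloor\lfloor\beta\rfloor$ up to $O(\alpha+\beta)$ rounding errors that will be absorbed into the $+3$, and the endpoint values supply the asserted $-(L+M)/2$. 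The residual Euler--Maclaurin integrals contribute to the $\tfrac14(|f'(\alpha)|+|g'(\beta)|)$ via the bound
\[
\left|\int_0^{\lfloor\alpha\rfloor}\!\psi(x)f'(x)\,dx\right| = \left|\int_0^{\lfloor\alpha\rfloor}\!\Psi(x)f''(x)\,dx\right| \le \tfrac18\int_0^\alpha |f''(x)|\,dx \le \tfrac18|f'(\alpha)|,
\]
where $\Psi(x)=\int_{\lfloor x\rfloor}^x\psi$ is the periodic mean-zero antiderivative satisfying $|\Psi|\le 1/8$, and we have used $f''<0$ together with integration by parts.

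The decisive step is bounding the two sawtooth sums $\sum_{j=1}^{\lfloor\alpha\rfloor}\psi(f(j))$ and $\sum_{k=1}^{\lfloor\beta\rfloor}\psi(g(k))$ via the van der Corput exponential-sum inequality recorded in \autoref{app-exponential}. On any subinterval where $|f''|$ stays within a dyadic range $[\lambda,2\lambda]$, that inequality yields an estimate of the form $(\textrm{length})\cdot\lambda^{1/3}+\lambda^{-1/2}$ with explicit constants. Because $f''$ is monotonic on $[0,\alpha]$, I can partition $[0,\alpha]$ into the finitely many intervals on which $|f''|$ lives in a single dyadic range $[2^k,2^{k+1}]$; on each such piece the $\lambda^{1/3}$ contribution is pointwise $\le 2|f''(x)|^{1/3}$, so the cumulative contribution is bounded by a constant multiple of $\int_0^\alpha|f''(x)|^{1/3}\,dx$, while the $\lambda^{-1/2}$ contributions form a geometric series dominated by $\max_{[0,\alpha]}|f''|^{-1/2}$ at the smallest scale. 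Tracking the van der Corput constants together with the dyadic overhead produces the explicit numerical constants $6$ and $175$ in the statement; the analogous bound handles the $g$-sum.

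The main obstacle will be producing this \emph{integrated} form $\int|f''|^{1/3}$ rather than a cruder $\alpha\cdot\max|f''|^{1/3}$, which is essential for the intended applications (for instance, $p$-ellipses with $1<p<2$, where, after rescaling in Part (b), $|f''|$ effectively blows up near the endpoint). The dyadic decomposition makes this work, but crucially uses the monotonicity of $f''$ to cleanly isolate pieces, and the geometric summability that collapses all $\lambda^{-1/2}$ contributions into a single $\max|f''|^{-1/2}$ at the smallest scale. Once these two estimates are in hand, the remaining work is routine bookkeeping that absorbs the handful of $O(1)$ integer-rounding corrections into the final $+3$.
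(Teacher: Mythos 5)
Your overall architecture matches the paper: the hyperbola-method decomposition $N=\sum\lfloor f(j)\rfloor+\sum\lfloor g(k)\rfloor-\lfloor\alpha\rfloor\lfloor\beta\rfloor$, the sawtooth substitution $\lfloor t\rfloor=t-\tfrac12-\psi(t)$, Euler--Maclaurin to convert to integrals, integration by parts against $\Psi$ with $|\Psi|\le 1/8$, the geometric identity $\int_0^\alpha f+\int_0^\beta g=\area(\Gamma)+\alpha\beta$, and the rescaling reduction of (b) to (a) via the substitutions $F''=(s^3/r)f''$, $F'=s^2 f'$. All of that is exactly what the paper does.

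The genuine problem is your treatment of the sawtooth sums. You propose to decompose $[0,\alpha]$ into dyadic ranges of $|f''|$, apply a cruder ``$(\mathrm{length})\cdot\lambda^{1/3}+\lambda^{-1/2}$'' inequality on each piece, and resum. But \autoref{lemma:kratzel} as stated in \autoref{app-exponential} \emph{already} gives the integrated bound directly: for $h''$ monotonic and nonzero on $[a,b]$,
\[
\Big|\sum_{a<n\le b}\psi\big(h(n)\big)\Big|\le 6\int_a^b|h''|^{1/3}\,dt+175\max_{[a,b]}\frac{1}{|h''|^{1/2}}+1,
\]
so one simply applies it once with $h=f$ on $[0,\alpha]$ and once with $h=g$ on $[0,\beta]$, using the hypothesis that $f''$ and $g''$ are each monotonic on their whole interval. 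Your dyadic detour is not merely unnecessary; it would \emph{fail to prove the stated proposition}, for two reasons. First, van der Corput contributes an additive ``$+1$'' per application, so a decomposition into $k$ dyadic pieces yields an additive $+k$ depending on the oscillation of $f''$, rather than the fixed $+3$ claimed. Second, the constants $6$ and $175$ are read off directly from \autoref{lemma:kratzel}; tracking them through a dyadic resummation, with the geometric-series overhead for the $\lambda^{-1/2}$ terms, would inflate both. (Indeed the paper's later \autoref{th:asymptotic_general} \emph{does} apply van der Corput piecewise, precisely to relax the monotonicity hypothesis, and accordingly pays with larger constants and a $+(l+\ell+1)$ term.) Finally, a smaller point: the claim that $\alpha\beta$ cancels $-\lfloor\alpha\rfloor\lfloor\beta\rfloor$ ``up to $O(\alpha+\beta)$ rounding errors absorbed into the $+3$'' is too crude as stated; those contributions actually combine with the Euler--Maclaurin boundary terms $\psi(\alpha)\beta$, $\psi(\beta)\alpha$, $\lfloor\alpha\rfloor/2$, $\lfloor\beta\rfloor/2$ to give the exact expression $-\{\alpha\}\{\beta\}+(\{\alpha\}+\{\beta\})/2\in[0,1]$, as in \autoref{eq:const_bound}. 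The cancellation is clean $O(1)$, not an $O(\alpha+\beta)$ quantity one hopes to swallow.
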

\autoref{th:asymptotic} and its proof are closely related to work of Kr\"{a}tzel \cite[Theorem~1]{kratzel04}. We give a direct proof below for two reasons: we want the estimate \autoref{eq:gamma_a_asymptotic} that depends explicitly on the stretching parameter $s$, and we want a proof that can be modified to use a weaker monotonicity hypothesis, in \autoref{th:asymptotic_general}. 

A better bound on the right side of \autoref{eq:gamma_a_asymptotic}, giving order $O(r^{\theta+\epsilon})$ with $\theta = 131/208 \simeq 0.63 < 2/3$, can be found in work of Huxley \cite{Hux03}, with precursors in \cite[Theorems~18.3.2 and 18.3.3]{Hux96}. That bound is difficult to prove, though, and the improvement is not important for our purposes since it leads to only a slight improvement in the rate of convergence for $S(r)$, namely from $O(r^{-1/6})$ to $O(r^{(\theta+\epsilon-1)/2})$ in \autoref{th:S_limit}. 
\begin{proof}
Part (a). 
We divide the region under $\Gamma$ into three parts. Let $N_1$ count the lattice points lying to the left of the line $x=\alpha$ and above $y=\beta$, and $N_2$ count the lattice points to the right of $x=\alpha$ and below $y=\beta$, and $N_3$ count the lattice points in the remaining rectangle $(0,\alpha] \times (0,\beta]$. That is, 
\begin{align*}
N_1&=\sum_{0<m\leq \alpha} \ \sum_{\beta <n\leq f(m)} 1 = \sum_{0<m\leq \alpha} \big( \lfloor f(m)\rfloor-\lfloor \beta \rfloor \big), \\
N_2&=\sum_{0<n\leq \beta} \ \sum_{\alpha<m\leq g(n)} 1 = \sum_{0<n\leq \beta} \big( \lfloor g(n)\rfloor-\lfloor \alpha \rfloor \big) , \\
N_3&=\lfloor \alpha \rfloor \lfloor \beta\rfloor .
\end{align*}
In terms of the \emph{sawtooth} function $\psi$, defined by
\[
\psi(x)=x-\lfloor x \rfloor -1/2 ,
\]
one can evaluate 
\[
N_1
=\sum_{0<m\leq \alpha} \big( f(m) -\psi\big(f(m)\big)-1/2-\lfloor \beta \rfloor \big) .
\]
Then we apply the Euler--Maclaurin summation formula 
\[
\sum_{0<m\leq \alpha} f(m)=\int_0^\alpha f(x) \ud x -\psi (\alpha)f(\alpha)+\psi(0)f(0) +\int_0^\alpha f'(x) \psi(x) \ud t
\]
(which we observe for later reference holds whenever $f$ is piecewise $C^1$-smooth) to deduce that
\begin{align*}
N_1
&=\int_0^{\alpha} f(x) \ud x -\psi(\alpha) f(\alpha) +\psi(0) f(0)+\int_0^{\alpha} f'(x) \psi(x) \ud x \\
& \hspace{3cm} - \sum_{0<m\leq \alpha} \psi \big(f(m)\big) - \lfloor \alpha \rfloor (1/2+\lfloor \beta \rfloor) \\
& =\int_0^{\alpha } f(x) \ud x -\psi(\alpha) \beta -M/2+\int_0^{\alpha} f'(x) \psi(x) \ud x \\
& \hspace{3cm} - \sum_{0<m\leq \alpha} \psi \big(f(m)\big) - \lfloor \alpha \rfloor (1/2+\lfloor \beta \rfloor) . 
\end{align*}
Similarly
\begin{align*}
N_2& =\int_0^{\beta } g(y) \ud y -\psi(\beta) \alpha -L/2+\int_0^{\beta} g'(y) \psi(y) \ud y \\
& \hspace{3cm} - \sum_{0<n\leq \beta} \psi \big(g(n)\big) - \lfloor \beta \rfloor (1/2+\lfloor \alpha \rfloor) ,
\end{align*}
and so
\begin{align}
N&=N_1+N_2+N_3\nonumber\\
&=\int_0^{\alpha } f(x) \ud x+\int_0^{\beta } g(y) \ud y - \lfloor \alpha \rfloor \lfloor \beta \rfloor - (L+M)/2 \nonumber\\
& \hspace{1cm} -\psi(\alpha) \beta -\lfloor \alpha \rfloor/2 -\psi(\beta) \alpha -\lfloor \beta \rfloor/2 \nonumber\\
& \hspace{1cm} +\int_0^{\alpha} f'(x) \psi(x) \ud x +\int_0^{\beta} g'(y) \psi(y) \ud y \nonumber \\
& \hspace{1cm}  - \sum_{0<m\leq \alpha} \psi \big(f(m)\big) - \sum_{0<n\leq \beta} \psi \big(g(n)\big) \nonumber\\
&=\area(\Gamma) -(L+M)/2+\int_0^{\alpha} f'(x) \psi(x) \ud x +\int_0^{\beta} g'(y) \psi(y) \ud y \nonumber\\
& \hspace{1cm}- \sum_{0<m\leq \alpha} \psi \big(f(m)\big) - \sum_{0<n\leq \beta} \psi \big(g(n)\big) + \text{remainder} \label{eq:lattice_count}
\end{align}
where 
\begin{equation}\label{eq:const_bound}
\text{remainder} =-(\alpha -\lfloor \alpha \rfloor)(\beta- \lfloor \beta \rfloor) + (\alpha -\lfloor \alpha \rfloor + \beta -\lfloor \beta \rfloor)/2 .
\end{equation}
This remainder lies between $0$ and $1$, since $0 \leq -xy+(x+y)/2 \leq 1$ when $x,y \in [0,1]$. 

We estimate the sum of sawtooth functions in \autoref{eq:lattice_count} by using \autoref{lemma:kratzel} (which is due to van der Corput): since $f''$ is monotonic and nonzero on $[0,\alpha]$, the thoerem implies 
\begin{align}
\Big|\sum_{0<m\leq \alpha }\psi\big(f(m)\big)\Big|& \leq 6 \int_0^{\alpha} |f''(x)|^{1/3} \ud x + 175 \max_{[0,\alpha]} \frac{1}{|f''|^{1/2}}+1 \label{eq:psi_f_bound}
\end{align}
and similarly
\begin{align}\label{eq:psi_g_bound}
\Big|\sum_{0<n\leq \beta }\psi\big(g(n)\big)\Big|& \leq 6\int_0^{\beta } |g''(y)|^{1/3} \ud y + 175 \max_{[0,\beta]} \frac{1}{|g''|^{1/2}}+1 .
\end{align}

To estimate the integrals of $f'\psi$ and $g'\psi$ in \autoref{eq:lattice_count}, we introduce the antiderivative of the sawtooth function, $\Psi(t)=\int_0^t \psi(z) \ud z$, and observe that $-1/8 \leq \Psi(t) \leq 0$ for all $t \in \R$. By integration by parts and the fact that $f''<0$, we have 
\begin{align}\label{eq:int_f_bound}
\big| \int_0^{\alpha} f'(x) \psi(x) \ud x \big|&=\Big| \big[ f'(x) \Psi(x) \big]_{x=0}^{x=\alpha}- \int_0^{\alpha} f''(x)\Psi(x) \ud x\Big| \nonumber\\
&\leq \frac{1}{8} |f'(\alpha)| + \frac{1}{8} \big| \int_0^\alpha f''(x) \ud x \big| \nonumber\\
&= \frac{1}{8}|f'(\alpha)|+\frac{1}{8} \big( f'(0) - f'(\alpha) \big) \nonumber\\
&\leq \frac{1}{4}|f'(\alpha)| 
\end{align}
since $f^\prime(\alpha) \leq f^\prime(0) \leq 0$. The same argument gives 
\begin{equation}\label{eq:int_g_bound}
\big| \int_0^{\beta} g'(y) \psi(y) \ud y \big|\leq \frac{1}{4}|g'(\beta)| .
\end{equation}
Combining \autoref{eq:lattice_count}--\autoref{eq:int_g_bound} completes the proof of Part~(a).
%Combining \autoref{eq:lattice_count}, \autoref{eq:const_bound}, \autoref{eq:psi_f_bound}, 
%\autoref{eq:psi_g_bound}, \autoref{eq:int_f_bound}, \autoref{eq:int_g_bound}, we finish the 
%proof.

\smallskip
Part (b). Simply apply Part (a) to the curve $r\Gamma(s)$ by replacing $L, M, f(x), g(y), \alpha, \beta$ with $rs^{-1}L, rsM, rsf(sx/r), rs^{-1}g(s^{-1}y/r), rs^{-1}\alpha, rs\beta$ respectively.
\end{proof}

\begin{remark}
\autoref{th:asymptotic} continues to hold if the point $(\alpha,\beta)=(L,0)$ lies at the right endpoint of the curve. One simply removes all mention of $g,\beta_j$ and $\epsilon$ from the hypotheses of the proposition, and removes all such terms from the conclusions, as can be justified by inspecting the proof above. The same remark holds for the advanced counting estimate in the following \autoref{th:asymptotic_general}. 
\end{remark}

\subsection*{Advanced counting estimate}
The hypotheses in the last result are somewhat restrictive. In particular, we would like to handle infinite curvature at the intercepts of the curve $\Gamma$, meaning $f^{\prime \prime}$ must be allowed to blow up at $x=0$. Further, we would like to relax the monotonicity assumption on $f^{\prime \prime}$. The next result achieves these goals. 

Two numbers $\delta$ and $\epsilon$ appear in the next Proposition. Their role in the proof is that on the intervals $0<x \leq \delta$ and $0<y \leq \epsilon$ we bound the sawtooth function trivially with $|\psi| \leq 1/2$. On the remaining intervals we seek cancellations. 
\begin{proposition}[Two-term counting estimate for more general curve]\label{th:asymptotic_general}
Take a point $(\alpha ,\beta) \in \Gamma$ lying in the first quadrant, and assume $f\in PC^2(0,\alpha]$ with $f'<0$ and $f''<0$, and that $f^{\prime \prime}$ is monotonic on $(\alpha_{i-1},\alpha_i]$ for $i=1,\dots,l$. Similarly assume $g\in PC^2(0,\beta]$ with $g'<0$ and $g''<0$, and that $g^{\prime \prime}$ is monotonic on $(\beta_{j-1},\beta_j]$ for $j=1,\dots,\ell$.  

\noindent (a) If $\delta \in (0,\alpha)$ and $\epsilon \in (0,\beta)$ then the number $N$ of positive-integer lattice points inside $\Gamma$ in the first quadrant satisfies:
\begin{align*}
\big|N- & \area (\Gamma)+(L+M)/2\big|\nonumber\\
& \leq 6\Big(\int_0^\alpha |f''(x)|^{1/3} \ud x+\int_0^\beta |g''(y)|^{1/3}\ud y\Big) \nonumber\\
& \quad + 175 \Big(\frac{1}{|f''(\delta)|^{1/2}}+\frac{1}{|g''(\epsilon)|^{1/2}}\Big) + 350 \Big(\sum_{i=1}^l \frac{1}{|f''(\alpha_i)|^{1/2}} + \sum_{j=1}^\ell \frac{1}{|g''(\beta_j)|^{1/2}}\Big) \nonumber \\
& \quad + \frac{1}{4} \big( \sum_{i=1}^l |f'(\alpha_i)| + \sum_{j=1}^\ell |g'(\beta_j)|\big)+\frac{1}{2}\big(\delta+\epsilon\big)+l+\ell+1 . \label{eq:gamma_asymptotic_general}
\end{align*}

\noindent (b) If functions 
\[
\delta : (0,\infty) \to (0,\alpha) , \qquad \epsilon : (0,\infty) \to (0,\beta) ,
\]
are given, then the number $N(r,s)$ of positive-integer lattice points inside $r\Gamma(s)$ in the first quadrant satisfies (for $r,s>0$):
\begin{align}
& \big|N(r,s)- r^2\area (\Gamma)+r(s^{-1}L+sM)/2\big|\nonumber\\
&\leq 6r^{2/3}\Big(\int_0^\alpha |f''(x)|^{1/3} \ud x+\int_0^\beta |g''(y)|^{1/3}\ud y\Big) \nonumber \\
& \quad + 175 r^{1/2}\Big(\frac{s^{-3/2}}{|f'' \big( \delta(r) \big)|^{1/2}}+\frac{s^{3/2}}{|g'' \big( \epsilon(r) \big)|^{1/2}}\Big) + 350r^{1/2} \Big( \sum_{i=1}^l \frac{s^{-3/2}}{|f''(\alpha_i)|^{1/2}} + \sum_{j=1}^\ell \frac{s^{3/2}}{|g''(\beta_j)|^{1/2}}\Big) \nonumber \\
& \quad+\frac{1}{4} \big( \sum_{i=1}^l s^2|f'(\alpha_i)| + \sum_{j=1}^\ell s^{-2}|g'(\beta_j)|\big)+\frac{r}{2}\big(s^{-1}\delta(r)+s\epsilon(r)\big)+l+\ell+1 . \label{eq:gamma_a_asymptotic_general} 
\end{align}
\end{proposition}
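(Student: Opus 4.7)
The plan is to adapt the proof of \autoref{th:asymptotic}. The decomposition $N=N_1+N_2+N_3$ and the Euler--Maclaurin summation formula both require only piecewise $C^1$-smoothness of $f$ and $g$, so they still apply and yield the same identity \autoref{eq:lattice_count} in the current setting. The task therefore reduces to obtaining new bounds, under the weaker hypotheses at hand, for the sawtooth sums $\sum_{0<m\leq\alpha}\psi(f(m))$ and $\sum_{0<n\leq\beta}\psi(g(n))$ and for the integrals $\int_0^\alpha f'\psi\,\ud x$ and $\int_0^\beta g'\psi\,\ud y$, given that $f''$ may now blow up or vanish at $0$ and is only piecewise monotonic on $(0,\alpha]$.

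For the sawtooth sum on $(0,\alpha]$ I would first peel off the contribution from integers in $(0,\delta]$, bounding it trivially by $|\psi|\leq 1/2$ times $\lfloor\delta\rfloor$, which gives at most $\delta/2$. On the remaining range I would break into the monotonicity subintervals $(\delta,\alpha_1],(\alpha_1,\alpha_2],\dots,(\alpha_{l-1},\alpha_l]$ (taking $\delta<\alpha_1$ for concreteness), so that $f''$ is monotonic and nonzero on each one, and apply \autoref{lemma:kratzel} (van der Corput) individually. Since $|f''|$ is monotonic on each piece, its reciprocal square-root attains the maximum at an endpoint; tallying these endpoint contributions across the $l$ pieces shows that $|f''(\delta)|^{-1/2}$ appears with combined coefficient $1$ and each $|f''(\alpha_i)|^{-1/2}$ with coefficient at most $2$, which after multiplication by $175$ reproduces the $175$ and $350$ constants in the statement. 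The $l$ additive ``$+1$'' terms from the $l$ applications of van der Corput, together with the corresponding $\ell$ terms from the $g$-side and the $[0,1]$-valued remainder from the Euler--Maclaurin decomposition, assemble into the $l+\ell+1$ in the final constant.

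For the integral $\int_0^\alpha f'\psi\,\ud x$ I would split at $\alpha_1,\dots,\alpha_{l-1}$ and integrate by parts on each piece $(\alpha_{i-1},\alpha_i]$ using the bounded antiderivative $\Psi(x)=\int_0^x\psi\,\ud z$, for which $|\Psi|\leq 1/8$. Because $|f'|$ is increasing on each piece (a consequence of $f''<0$), the sign cancellation from the proof of \autoref{th:asymptotic} goes through on each subinterval and produces the bound $\tfrac{1}{4}|f'(\alpha_i)|$; summing yields exactly the $\tfrac{1}{4}\sum_i|f'(\alpha_i)|$ term. The delicate step is the integration by parts on the leftmost piece $(0,\alpha_1]$: the boundary contribution at $0$ equals $f'(0^+)\Psi(0^+)=0$, because $\Psi(0^+)=0$ and $f'(0^+)$ is finite---the latter because $f''<0$ forces $f'$ to be decreasing on $(0,\alpha_1]$, so that $f'(0^+)=\sup_{x>0}f'(x)$ is automatically a finite non-positive real number. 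A parallel argument handles the $g$-integral.

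Part~(b) then follows from Part~(a) by rescaling. One substitutes $L,M,\alpha,\beta,f(x),g(y),\delta,\epsilon$ with their counterparts $rs^{-1}L,rsM,rs^{-1}\alpha,rs\beta,rsf(sx/r),rs^{-1}g(s^{-1}y/r),rs^{-1}\delta(r),rs\epsilon(r)$ for the curve $r\Gamma(s)$, and tracks powers of $r$ and $s$ via the chain rule. For example $\widetilde{f}''(x)=(s^3/r)f''(sx/r)$, which gives $|\widetilde{f}''(\widetilde{\delta})|^{-1/2}=r^{1/2}s^{-3/2}|f''(\delta(r))|^{-1/2}$ and accounts for the prefactor $r^{1/2}s^{-3/2}$ in the bound; the change of variables $u=sx/r$ in $\int|\widetilde{f}''|^{1/3}\,\ud x$ produces the $r^{2/3}$. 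The main obstacle in writing out the full proof is purely bookkeeping---verifying that the van der Corput exponents $1/3$ and $1/2$ interact correctly with the rescaling to yield the stated powers of $r$ and $s$ in \autoref{eq:gamma_a_asymptotic_general}.
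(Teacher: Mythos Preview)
Your proposal is correct and follows essentially the same route as the paper: reuse identity \autoref{eq:lattice_count}, peel off $(0,\delta]$ trivially, apply \autoref{lemma:kratzel} piecewise on the remaining subintervals, integrate $f'\psi$ by parts piecewise, and rescale for Part~(b). The only cosmetic difference is that the paper allows $\delta$ to fall in an arbitrary subinterval $(\alpha_{j-1},\alpha_j]$ rather than assuming $\delta<\alpha_1$, but the bookkeeping is identical and your endpoint-counting for the $175$/$350$ coefficients matches the paper's exactly.
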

The integral of $|f''|^{1/3}$ appearing in the conclusion of \autoref{th:asymptotic_general} is finite, because by H\"{o}lder's inequality and the fact that $f^{\prime \prime}<0$ and $f$ is decreasing, we have
\[
\int_0^{\alpha_1} |f''(x)|^{1/3} \ud x \leq \alpha_1^{2/3} \Big| \int_0^{\alpha_1} f''(x) \ud x \Big|^{\! 1/3} = \alpha_1^{2/3} |f'(0^+)-f'(\alpha_1^-)|^{1/3} < \infty .
\]
The integral of $|g^{\prime \prime}|^{1/3}$ is finite for similar reasons. 
\begin{proof}
Part (a). 
The lattice point counting equation \autoref{eq:lattice_count} holds just as in the proof of \autoref{th:asymptotic}, and so the task is to estimate each of the terms on the right side of that equation. 

Estimate \autoref{eq:psi_f_bound} on the sum of the sawtooth function is no longer valid, because $f''$ is no longer assumed to be monotonic on the whole interval $[0,\alpha]$. To control this sawtooth sum, we first observe 
\[
\big|\sum_{0 < m \leq \delta }\psi\big(f(m)\big)\big| \leq \frac{1}{2} \delta
\]
since $|\psi| \leq 1/2$ everywhere. Next, we have $\delta \in (\alpha_{j-1},\alpha_j]$ for some $j \in \{ 1,\ldots,l \}$, and  
\[
\big|\sum_{\delta < m \leq \alpha_j}\psi\big(f(m)\big)\big| \leq 6 \int_\delta^{\alpha_j} |f''(x)|^{1/3} \ud x + 175 \max \Big\{ \frac{1}{|f''(\delta)|^{1/2}} , \frac{1}{|f''(\alpha_j)|^{1/2}} \Big\} + 1 
\]
by \autoref{lemma:kratzel} applied on the interval $[\delta,\alpha_j]$. Applying that theorem again on each interval $[\alpha_{i-1},\alpha_i]$ with $i=j+1,\ldots,l$ gives that
\[
\big|\sum_{\alpha_{i-1} < m \leq \alpha_i}\psi\big(f(m)\big)\big| \leq 6 \int_{\alpha_{i-1}}^{\alpha_i} |f''(x)|^{1/3} \ud x + 175 \max \Big\{ \frac{1}{|f''(\alpha_{i-1})|^{1/2}} , \frac{1}{|f''(\alpha_i)|^{1/2}} \Big\} + 1 .
\]
By summing the last three displayed inequalities, we deduce a sawtooth bound
\begin{align}
& \Big|\sum_{0<m\leq \alpha }\psi\big(f(m)\big)\Big| \notag \\
& \leq \frac{1}{2} \delta+ 6 \int_\delta^\alpha |f''(x)|^{1/3} \ud x + \frac{175}{|f''(\delta)|^{1/2}} +  \sum_{i=j}^{l-1} \frac{350}{|f''(\alpha_i)|^{1/2}} + \frac{175}{|f''(\alpha)|^{1/2}} + l-j+1 \nonumber \\
& \leq \frac{1}{2} \delta+ 6 \int_0^\alpha |f''(x)|^{1/3} \ud x + \frac{175}{|f''(\delta)|^{1/2}} +  \sum_{i=1}^l \frac{350}{|f''(\alpha_i)|^{1/2}} + l . \label{eq:psi_f_bound_gd}
\end{align}
%
%Similarly,
%%
%\begin{equation} \label{eq:psi_g_bound_gd}
%\Big|\sum_{0<n\leq \beta }\psi\big(g(n)\big)\Big| 
%\leq \frac{1}{2} \epsilon+ 6 \int_0^\beta |g''(y)|^{1/3} \ud y + \frac{175}{|g''(\epsilon)|^{1/2}} +  \sum_{j=1}^\ell \frac{350}{|g''(\beta_j)|^{1/2}} + 3\ell . 
%\end{equation}
%%

Next, we adapt estimate \autoref{eq:int_f_bound} on the integral of $f^\prime \psi$ by simply applying the same argument on each interval $[\alpha_{i-1},\alpha_i]$, hence finding 
\begin{align}\label{eq:int_f_bd_gd}
\Big| \int_0^{\alpha} f'(x) \psi(x) \ud x \Big|
&\leq \sum_{i=1}^l \Big[ \frac{1}{8}|f'(\alpha_i)|+\frac{1}{8} \big( f'(\alpha_{i-1})-f'(\alpha_i) \big) \Big] \nonumber\\
&\leq \frac{1}{4}\sum_{i=1}^l|f'(\alpha_i)|.
\end{align}
%
%Similarly
%\begin{equation}\label{eq:int_g_bd_gd}
%\Big| \int_0^{\beta} g'(x) \psi(x) \ud x \Big| \leq \frac{1}{4}\sum_{j=1}^\ell |g'(\beta_j)|.
%\end{equation}
%
By combining \autoref{eq:lattice_count}, \autoref{eq:const_bound} with \autoref{eq:psi_f_bound_gd}, \autoref{eq:int_f_bd_gd} and the analogous estimates on $g$, we complete the proof of Part~(a).

\smallskip
Part (b).
Apply Part (a) to the curve $r\Gamma(s)$ by replacing $L, M, f(x), g(y), \alpha, \beta,\delta,\epsilon$ with $rs^{-1}L, rsM, rsf(sx/r), rs^{-1}g(s^{-1}y/r), rs^{-1}\alpha, rs\beta,rs^{-1}\delta(r),rs\epsilon(r)$ respectively.
\end{proof}

\section{\bf A unified approach}
\label{sec:structure}

The next proposition provides a unified framework for proving our theorems later in the paper. It adapts the scheme of proof employed by Antunes and Freitas \cite{AF13}.

\begin{proposition}\label{prop:unified}
Let $A \in \R, L>0$, and $0<\theta<1$. Consider a real valued function $H(r,s)$ (for $r,s>0$) such that for each closed interval $[s_1,s_2] \subset (0,\infty)$ one has 
\begin{equation}\label{eq:two-term-ineq}
H(r,s) = Ar^2 - Lr(s+s^{-1})/2+O(r^\theta),
\end{equation}
with $s \in [s_1,s_2]$ allowed to vary as $r \to \infty$. 
Assume the function $s \mapsto H(r,s)$ attains its maximum value, for each $r>0$, and write $S(r) = \argmax_{s>0} H(r,s)$ for the set of maximizing points. Suppose 
\begin{equation}\label{eq:two-term-upper-bound}
S(r) \subset [s_1,s_2]  \qquad \text{for all large $r>0$,}
\end{equation}
for some constants $s_1,s_2>0$. 

Then the maximizing set $S(r)$ converges to the point $\{ 1 \}$ as $r \to \infty$, with 
\[
S(r) \subset \big[1-O(r^{-(1-\theta)/2}),1+O(r^{-(1-\theta)/2})\big], 
\]
and the maximum value of $H$ has asymptotic formula
\[
\max_{s > 0} H(r,s) = Ar^2-Lr + O(r^{\theta}) .
\]
\end{proposition}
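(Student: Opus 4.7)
The plan is to exploit the convexity of the auxiliary function $\varphi(s) = s + s^{-1}$, which attains its minimum value $\varphi(1)=2$ at $s=1$ and satisfies the key algebraic identity $\varphi(s)-2 = (s-1)^2/s$. Since the leading two terms in \autoref{eq:two-term-ineq} are $Ar^2 - Lr \varphi(s)/2$, the main term is maximized precisely at $s=1$, and the $O(r^\theta)$ correction will be shown to force the maximizers to cluster near $1$.

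First I would establish a lower bound for the maximum by evaluating at $s=1$: applying \autoref{eq:two-term-ineq} with the trivial interval $[s_1,s_2]=[1,1]$ gives
\[
\max_{s>0} H(r,s) \geq H(r,1) = Ar^2 - Lr + O(r^\theta) .
\]
Next, pick any $s \in S(r)$. By \autoref{eq:two-term-upper-bound}, $s$ lies in a fixed compact interval $[s_1,s_2]$ for all large $r$, so the asymptotic \autoref{eq:two-term-ineq} applies with a uniform error term:
\[
H(r,s) = Ar^2 - \tfrac{L r}{2}\big(s + s^{-1}\big) + O(r^\theta) = Ar^2 - Lr - \tfrac{Lr}{2}\big(\varphi(s)-2\big) + O(r^\theta) .
\]
Combining this with the maximality inequality $H(r,s) \geq H(r,1)$ and the lower bound just obtained yields
\[
\tfrac{Lr}{2}\big(\varphi(s) - 2\big) \leq O(r^\theta), \qquad \text{i.e.,} \qquad \frac{(s-1)^2}{s} \leq O(r^{\theta-1}) .
\]
Since $s$ is bounded above and below by positive constants, this gives $(s-1)^2 = O(r^{\theta-1})$, hence $|s-1| = O(r^{-(1-\theta)/2})$, which is the first claimed conclusion.

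Finally, for the asymptotic formula on the maximum value, feed the bound $|s-1| = O(r^{-(1-\theta)/2})$ back into the expression for $H(r,s)$. Since $\varphi(s)-2 = (s-1)^2/s = O(r^{\theta-1})$ for $s \in S(r)$, the term $\tfrac{Lr}{2}(\varphi(s)-2)$ is itself $O(r^\theta)$, so
\[
\max_{s>0} H(r,s) = H(r,s) = Ar^2 - Lr + O(r^\theta) ,
\]
completing the proof. The only mildly delicate point is ensuring the $O(r^\theta)$ error in \autoref{eq:two-term-ineq} remains uniform as $s$ varies, but this is granted by the hypothesis that the asymptotic holds uniformly on compact $s$-intervals together with the a priori boundedness \autoref{eq:two-term-upper-bound} of $S(r)$; no genuine analytic obstacle arises.
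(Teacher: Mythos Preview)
Your proof is correct and follows essentially the same route as the paper: compare $H(r,s)$ for $s\in S(r)$ against $H(r,1)$ using the uniform asymptotic \autoref{eq:two-term-ineq} on the fixed interval from \autoref{eq:two-term-upper-bound}, deduce $s+s^{-1}-2=O(r^{\theta-1})$, and convert this to $|s-1|=O(r^{-(1-\theta)/2})$. The only cosmetic difference is that you use the identity $(s-1)^2/s=s+s^{-1}-2$ together with the a priori bound $s\in[s_1,s_2]$, whereas the paper packages this step as a separate elementary lemma (\autoref{le:squarecompletion}) that avoids invoking the lower bound on $s$.
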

The error term $O(r^\theta)$ in \eqref{eq:two-term-ineq} has implied constant depending on the interval $[s_1,s_2]$.
\begin{proof}
Since $S(r) \subset [s_1,s_2]$ by hypothesis \autoref{eq:two-term-upper-bound}, the asymptotic estimate \autoref{eq:two-term-ineq} implies  
\begin{align*}
H(r,s) & = Ar^2-Lr(s+s^{-1})/2 + O(r^{\theta}) , \\
H(r,1) & = Ar^2-Lr + O(r^{\theta}),
\end{align*}
for $s \in S(r)$ and $r \to \infty$. Since $s$ is a maximizing value, we have $H(r,1)\leq H(r,s)$ and so
\begin{equation} \label{eq:s-sinverse}
s+s^{-1} \leq 2 +O(r^{-(1-\theta)}) .
\end{equation}
Hence $s=1+O(r^{-(1-\theta)/2})$ by \autoref{le:squarecompletion}, which proves the first claim in the theorem. For the second claim, when $s \in S(r)$ we have
$
H(r,s) = Ar^2-Lr + O(r^{\theta}) 
$
as $r \to \infty$, by \autoref{eq:two-term-ineq} and using also that $1 \leq (s+s^{-1})/2 \leq 1+O(r^{-(1-\theta)})$ by \autoref{eq:s-sinverse}.
\end{proof}

\section{\bf Proof of \autoref{th:S_limit} and \autoref{th:S_limit_general}}
\label{sec:mainproof}

\subsection*{Proof of \autoref{th:S_limit}}
The theorem follows directly from \autoref{prop:unified} with $H(r,s)$ being the lattice counting function $N(r,s)$. The hypotheses of the proposition are verified as follows. 

Suppose $0<s_1<s_2<\infty$. By \autoref{th:asymptotic}(b) with $L=M$ one has 
\begin{equation}\label{eq:two-term-ineqN}
N(r,s) = \area(\Gamma)r^2 - Lr(s+s^{-1})/2+O(r^{2/3}),
\end{equation}
with $s \in [s_1,s_2]$ as $r \to \infty$. Thus hypothesis \eqref{eq:two-term-ineq} holds for $N(r,s)$ with the choices $A=\area(\Gamma), \theta=2/3$, and $L$ equalling the intercept value of $\Gamma$. 

The boundedness hypothesis \eqref{eq:two-term-upper-bound} holds by \autoref{thm:s_bounded}. 

\subsection*{Proof of \autoref{th:S_limit_general}}
\label{sec:generalproof}
Again let $H(r,s)$ be the lattice counting function $N(r,s)$, take $A=\area(\Gamma)$, let $L$ be the intercept value of $\Gamma$, and note the boundedness hypothesis \eqref{eq:two-term-upper-bound} holds by \autoref{thm:s_bounded}. To finish verifying the hypotheses of \autoref{prop:unified}, we suppose $0<s_1<s_2<\infty$ and show that \autoref{eq:two-term-ineq} holds. 

Take $\theta=1-2e$, where the number $e=\min \{ \tfrac{1}{6},a_1,a_2,b_1,b_2 \}$ was defined in \autoref{th:S_limit_general}. Hypothesis \eqref{eq:two-term-ineq} is the assertion that
\begin{equation}\label{eq:two-term-ineqNgeneral}
N(r,s) = \area(\Gamma)r^2 - Lr(s+s^{-1})/2+O(r^{1-2e}),
\end{equation}
with $s \in [s_1,s_2]$ as $r \to \infty$. To verify this asymptotic, we will estimate the remainder terms in \autoref{th:asymptotic_general}(b) as follows. In that proposition take $L=M$, and note $\delta(r)<\alpha$ and $\epsilon(r)<\beta$ for all large $r$ by assumptions \autoref{eq:f_sup} and \autoref{eq:g_sup}. We will show the right side of estimate \autoref{eq:gamma_a_asymptotic_general} in \autoref{th:asymptotic_general}(b) is bounded by
\begin{align*}
& O(r^{2/3}) + s^{-3/2} O(r^{1-2a_2}) + s^{3/2} O(r^{1-2b_2}) + (s^{-3/2}+s^{3/2}) O(r^{1/2}) \\
& \quad \qquad + (s^2+s^{-2}) O(1) + s^{-1} O(r^{1-2a_1}) + s O(r^{1-2b_1}) + O(1)
\end{align*}
for large enough $r$, where the implied constants in the $O(\cdot)$-terms depend only on the curve $\Gamma$ and are independent of $s$. Since each one of these $O(\cdot)$-terms is bounded by $O(r^{1-2e})$, and $s$ and $s^{-1}$ are bounded when $s \in [s_1,s_2]$, hypothesis \eqref{eq:two-term-ineq} will hold as desired. 

Examining now the right side of \autoref{eq:gamma_a_asymptotic_general}, we see the first two terms are obviously $O(r^{2/3})$. For the next term, observe by assumption in \autoref{eq:f_sup} that 
\[
\frac{r^{1/2} s^{-3/2}}{|f''(\delta(r))|^{1/2}}=s^{-3/2}O(r^{1-2a_2}) ,
\]
and similarly for the analogous term involving $g''$. Since $f''(\alpha_i)$ and $g''(\beta_j)$ are constant, the corresponding terms in \autoref{eq:gamma_a_asymptotic_general} can be estimated by $(s^{-3/2}+s^{3/2}) O(r^{1/2})$. Similarly, the terms in \eqref{eq:gamma_a_asymptotic_general} involving $f'(\alpha_i)$ and $g'(\beta_j)$ can be estimated by $(s^2+s^{-2}) O(1)$. Next, $s^{-1}r\delta(r)=s^{-1}O(r^{1-2a_1})$ by the assumption in \autoref{eq:f_sup}, and similarly for $\epsilon(r)$. And, of course, $l+\ell+1$ is constant, which completes the verification of hypothesis \autoref{eq:two-term-ineq}.

\section{\bf Proof of \autoref{thm:s_bounded_neumann} and \autoref{th:S_limit_neumann}}
\label{sec:closedquadrant}

First we need a two-term bound on the counting function in the closed first quadrant, as provided by the next proposition. The result is an analogue of \autoref{prop:counting_ineq}, although the constant $\mathcal{C}$ is slightly different than in that result. 

Assume $f$ is concave and strictly decreasing on $[0,L]$, with $f(0)=M, f(L)=0$. The intercepts $L$ and $M$ need not be equal. 

\begin{proposition}[Two-term lower bound on counting function]\label{prop:counting_ineq_neumann}
Let $\mathcal{C}=M-f(L/4)$.

\noindent (a) The number $\mathcal{N}$ of nonnegative-integer lattice points lying inside $\Gamma$ in the closed first quadrant satisfies:
\begin{equation}
\mathcal{N} \geq \area(\Gamma) + \frac{1}{2} \mathcal{C} .
\end{equation}

\noindent (b) The number of nonnegative-integer lattice points lying inside $r\Gamma(s)$ in the closed first quadrant satisfies (for $r,s>0$):
\[
\mathcal{N}(r,s)\geq r^2\area(\Gamma) + \frac{1}{2}\mathcal{C}rs .
\]
\end{proposition}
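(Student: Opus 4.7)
The plan is to adapt the geometric argument in the proof of Proposition~\ref{prop:counting_ineq}, reversing the roles of ``above" and ``below" as appropriate to the Neumann setting. Now $\mathcal{N}$ equals the total area of the unit squares having \emph{lower-left} corners at the nonnegative-integer lattice points inside $\Gamma$, and since $f$ is decreasing this union of squares \emph{contains} the region under $\Gamma$, immediately giving $\mathcal{N} \geq \area(\Gamma)$. The task in Part~(a) is to strengthen this by $\tfrac{1}{2}\mathcal{C}$.

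I would decompose the excess column by column:
\[
\mathcal{N} - \area(\Gamma) = \sum_{i=0}^{\lfloor L \rfloor} E_i, \qquad E_i := \lfloor f(i) \rfloor + 1 - \int_i^{i+1} f(x) \ud x,
\]
where $f$ is extended by $0$ past $L$. The single-column bound $E_i \geq f(i) - \int_i^{i+1} f$ coming from $\lfloor f(i) \rfloor + 1 \geq f(i)$ is too weak, since concavity of $f$ supplies only an \emph{upper} bound on $f(i) - \int_i^{i+1} f$. The key is to pair consecutive columns. Combining $\lfloor f(j) \rfloor + 1 \geq f(j)$ for $j = i, i+1$ with Jensen's inequality
\[
\int_i^{i+2} f(x) \ud x \leq 2 f(i+1)
\]
(valid when $[i, i+2] \subset [0,L]$, by concavity of $f$ around the midpoint $i+1$) yields the two-column bound $E_i + E_{i+1} \geq f(i) - f(i+1)$. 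Telescoping over $i = 0, 1, \ldots, \lfloor L \rfloor - 2$ and double-counting the interior columns gives
\[
\mathcal{N} - \area(\Gamma) \geq \tfrac{1}{2}\bigl(M - f(\lfloor L \rfloor - 1)\bigr).
\]
Since $\lfloor L \rfloor - 1 \geq L/4$ whenever $L \geq 2$ and $f$ is decreasing, one has $f(\lfloor L \rfloor - 1) \leq f(L/4)$, giving Part~(a) in that regime. For small $L$ (say $L < 2$), where the telescoping sum is empty or reduces to a single boundary pair, I would argue directly from $\mathcal{N} \geq \lfloor M \rfloor + 1 \geq M$ together with the concavity upper bound $\area(\Gamma) \leq L f(L/2) \leq L f(L/4)$; a short case analysis then verifies the inequality.

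Part~(b) follows from Part~(a) by the standard rescaling substitution $L, M, f(x) \mapsto rs^{-1}L, rsM, rsf(sx/r)$, exactly as in the passage from Part~(a) to Part~(b) of Proposition~\ref{prop:counting_ineq}.

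The main obstacle is handling the final pair of columns, where $[i, i+2] \not\subset [0,L]$ and Jensen's inequality on the $0$-extension of $f$ fails. My approach sidesteps this by truncating the telescoping sum just before the boundary pair; the cost is replacing $f(\lfloor L \rfloor)$ by $f(\lfloor L \rfloor - 1)$ in the final estimate, and this is why the threshold in $\mathcal{C}$ must be pushed back from $L/2$ (as in the Dirichlet case) to $L/4$.
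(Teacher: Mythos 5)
Your column-pairing argument for $L \geq 2$ is correct and takes a genuinely different route from the paper. You decompose the excess $\mathcal{N} - \area(\Gamma)$ into column contributions $E_i \geq 0$, use Jensen's inequality for concave $f$ around the midpoint $i+1$ to get $E_i + E_{i+1} \geq f(i) - f(i+1)$, and then telescope. The paper instead constructs explicit right triangles of width $1$ based on the one-sided tangent slopes $f'(i^+)$, shows these fit between $\Gamma$ and the union of unit squares, and uses concavity to compare $|f'(i^+)|$ with the first difference $f(i-1)-f(i)$. Both arguments land on the same telescoped quantity $\tfrac{1}{2}\bigl(M - f(\lfloor L\rfloor - 1)\bigr)$ and then invoke $\lfloor L\rfloor - 1 \geq L/4$ for $L \geq 2$. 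Your version avoids differentiating $f$ at all (the paper's triangles need one-sided derivatives), which is a mild simplification.

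However, your proposed handling of the case $L < 2$ has a genuine gap. The bounds $\mathcal{N} \geq \lfloor M\rfloor + 1$ and $\area(\Gamma) \leq L\,f(L/4)$ are too weak to yield the claim: after substituting $\lfloor M\rfloor + 1 \geq M$, the inequality you need becomes $\tfrac{1}{2}M \geq (L - \tfrac{1}{2})\,f(L/4)$. Since concavity forces $f(L/4) \geq \tfrac{3}{4}M$, this already fails whenever $L > \tfrac{7}{6}$ with $f$ near linear, and in fact for $L$ close to $2$ your lower bound $\lfloor M\rfloor + 1 - L\,f(L/4)$ can be \emph{negative} (e.g.\ $L = M = 1.99$, $f$ linear). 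No ``short case analysis'' can rescue this, because the culprit is the upper bound $\area(\Gamma) \leq L\,f(L/4)$, which overshoots the true area by a factor close to $2$. The paper's Case~(i) sidesteps all of this with a single rectangle: the box $[L/4, 1] \times [f(L/4), M]$ sits inside the unit-square union (which contains the full column $[0,1] \times [0, \lfloor M\rfloor + 1]$) yet is disjoint from the region under $\Gamma$, and its width $1 - L/4 \geq \tfrac{1}{2}$ when $L \leq 2$ immediately gives $\mathcal{N} - \area(\Gamma) \geq \tfrac{1}{2}\bigl(M - f(L/4)\bigr)$. You should replace your small-$L$ argument with that construction or something equivalent.
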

\begin{proof}
Part (a). 
Clearly $\mathcal{N}$ equals the total area of the squares of sidelength $1$ having lower left vertices at nonnegative-integer lattice points inside the curve $\Gamma$. The union of these squares contains $\Gamma$, since the curve is decreasing.

We separate the proof into cases according to the value of $L$.

Case (i): Suppose $L\leq 2$, so that $L/4 \leq 1/2$. Consider a rectangle whose lower left vertex sits on the curve at $x=L/4$, and has vertices
\[
\big( L/4,f(L/4) \big), \quad \big( 1, f(L/4) \big), \quad \big( 1, M \big), \quad \big( L/4, M \big) .
\] 
By construction, this rectangle lies inside the union of squares of sidelength $1$, and it lies above $\Gamma$ because the curve is decreasing. Hence
\begin{align*}
\mathcal{N} 
& \geq \area(\Gamma) + \area(\text{rectangle}) \\
& = \area(\Gamma) +(1-L/4)\big(M - f(L/4)\big) \\
& \geq \area(\Gamma)+\frac{1}{2}\big(M-f(L/4)\big) 
\end{align*}
as desired.

Case (ii): Suppose $L\geq 2$. Consider the right triangles of width $1$ formed by tangent lines from the right on $\Gamma$, that is, the triangles with vertices $\big( i,f(i) \big),\big( i+1,f(i) \big),\big( i+1,f(i)+f'(i^+) \big)$, where $i=0,1,\ldots,\lfloor L \rfloor-1$. These triangles all lie above the horizontal axis, since by concavity $f(i)+f'(i^+) \geq f(i+1) \geq 0$; the last inequality explains why the biggest $i$-value we consider is $\lfloor L \rfloor - 1$. 

\begin{figure}
\includegraphics[scale=0.4]{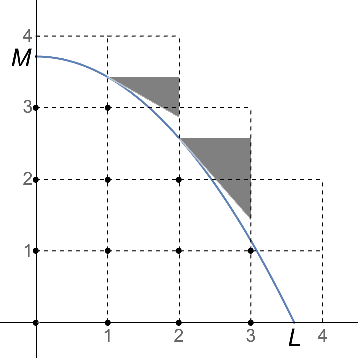}
\caption{\label{fig:neumann_counting}Nonnegative integer lattice count satisfies $\mathcal{N} \geq \area(\Gamma) + \area(\text{triangles})$, in proof of \autoref{prop:counting_ineq_neumann}(a) when $L \geq 2$.}
\end{figure}

Thus these triangles lie inside the union of squares of sidelength $1$, and lie above $\Gamma$ by concavity. Hence
\[
\mathcal{N} \geq \area(\Gamma) + \area(\text{triangles}) .
\]
To complete the proof of Case (ii), we estimate
\begin{align*}
\area(\text{triangles}) 
& \geq \frac{1}{2} \sum_{i=1}^{\lfloor L \rfloor - 1} |f'(i^+)| \\
& \geq \frac{1}{2} \sum_{i=1}^{\lfloor L \rfloor - 1} \big( f(i-1)-f(i) \big) \qquad \text{by concavity} \\
& = \frac{1}{2} \big( f(0) - f(\lfloor L \rfloor - 1) \big) \\
& \geq \frac{1}{2}\big( M-f(L/4) \big) ,
\end{align*}
because $\lfloor L \rfloor-1 \geq L/2 \geq L/4$ when $L \geq 2$. 

\smallskip
Part (b).
Replace $\Gamma$ in Part (a) with the curve $r\Gamma(s)$, meaning we replace $L, M, f(x)$ with $rs^{-1}L, rsM, rsf(sx/r)$ respectively.
\end{proof}

\subsection*{Proof of \autoref{thm:s_bounded_neumann}} Since $N(r,s)\leq r^2\area(\Gamma)$, taking $s=1$ and $L=M$ in \autoref{le:relation} gives that
\[
\mathcal{N}(r,1)\leq r^2 \area(\Gamma) + 2rL + 1.
\]
Now suppose $s\in \mathcal{S}(r)$ is a minimizing value, so that $\mathcal{N}(r,s)\leq \mathcal{N}(r,1)$. Since 
\[
\mathcal{N}(r,s) \geq r^2 \area(\Gamma) +\frac{1}{2}\mathcal{C}rs
\]
by \autoref{prop:counting_ineq_neumann}(b), we conclude from above that
\[
\frac{1}{2•}\mathcal{C}rs \leq 2rL+1\leq \frac{5}{2•}rL ,
\]
where the last inequality holds for $r \geq 2/L$. Hence $s \leq 5L/\mathcal{C}$, and so the set $\mathcal{S}(r)$ is bounded above. Interchanging the horizontal and vertical axes and recalling $L=M$ (\emph{i.e.}, the intercepts are equal in this theorem), one finds similarly that $s^{-1}\leq 5L/\widetilde{\mathcal{C}}$. Hence $\mathcal{S}(r)$ is bounded below away from $0$, which completes the proof.

\subsection*{Proof of \autoref{th:S_limit_neumann}}
The theorem will follow from \autoref{prop:unified} with the choice $H(r,s)=-\mathcal{N}(r,s)$, since maximizing $s \mapsto H(r,s)$ corresponds to minimizing $s \mapsto \mathcal{N}(r,s)$. The boundedness hypothesis \eqref{eq:two-term-upper-bound} of the proposition holds by \autoref{thm:s_bounded_neumann}. The other hypothesis \eqref{eq:two-term-ineq} is verified as follows. 

Taking $L=M$ in the relation between $\mathcal{N}(r,s)$ and $N(r,s)$ in \autoref{le:relation}, and calling on the asymptotic for $N(r,s)$ in either \autoref{eq:two-term-ineqN} (under the assumptions of \autoref{th:S_limit}) or \autoref{eq:two-term-ineqNgeneral} (under the assumptions of \autoref{th:S_limit_general}), we deduce
\[
\mathcal{N}(r,s) = \area(\Gamma)r^2+Lr(s^{-1}+s)/2+ O(r^\theta)
\]
with $s \in [s_1,s_2]$ allowed to vary as $r \to \infty$, where
\[
\theta = 
\begin{cases}
2/3 & \text{under the assumptions of \autoref{th:S_limit},} \\
1-2e & \text{under the assumptions of \autoref{th:S_limit_general}.}
\end{cases}
\]
That is, we have verified hypothesis \eqref{eq:two-term-ineq} with  $H=-\mathcal{N},A=-\area(\Gamma)$, and $L$ the intercept value of $\Gamma$.

\section{\bf Open problem for $1$-ellipses --- lattice points in right triangles}
\label{sec:oneellipse}
 
Lattice point maximization for right triangles appears to be an open problem. Consider the $p$-circle with $p=1$, which is a diamond with vertices at $(\pm 1,0)$ and $(0,\pm 1)$. It intersects the first quadrant in the line segment $\Gamma$ joining the points $(0,1)$ and $(1,0)$. Here $L=M=1$. Stretching the $1$-circle in the $x$- and $y$-directions gives a $1$-ellipse 
\[
|sx|+|s^{-1}y|=1 ,
\]
which together with the coordinate axes forms a right triangle of area $1/2$ in the first quadrant, with one vertex at the origin and hypotenuse $\Gamma(s)$ joining the vertices at $(s^{-1},0)$ and $(0,s)$. As previously, we write $S(r)$ for the set of $s$-values that maximize the number of positive-integer (first quadrant) lattice points below or on $r\Gamma(s)$, when $r>0$. 

First of all, the 45--45--90 degree triangle ($s=1$) does not always enclose the most lattice points: \autoref{fig:counterexample_p=1} shows an example.

\begin{figure}
\includegraphics[scale=0.35]{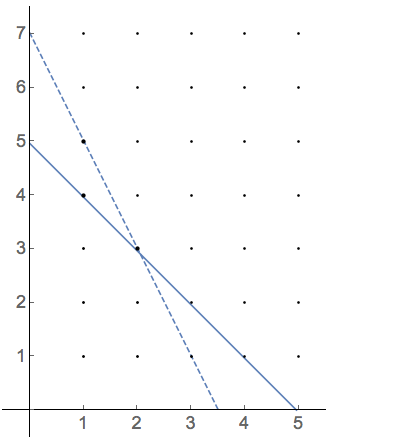}
\caption{\label{fig:counterexample_p=1}The $1$-ellipse $sx+s^{-1}y=r$ with $r=4.96$, for  $s=1$ (solid) and $s=\sqrt{2}$ (dashed). The dashed line encloses three more lattice points (shown in bold) than the solid line.}
\end{figure}

The open problem is to understand the limiting behavior of the maximizing $s$-values. Does $S(r)$ converge to $\{ 1 \}$ as $r \to \infty$? We proved the answer is ``Yes'' for $p$-ellipses when $1<p<\infty$ (\autoref{ex:p-ellipse}), but for $p=1$ we suggest the answer is ``No''. Numerical evidence in \autoref{fig:optimal_s_p_1} suggests that the set $S(r)$ does not converge to $\{ 1 \}$ as $r \to \infty$. Indeed, the plotted heights appear to cluster at a large number of values, possibly dense in some interval around $s=1$. These cluster values presumably have some number theoretic significance. 

\begin{figure}
\includegraphics[scale=0.45]{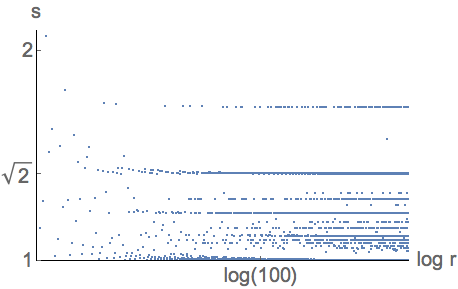}
\caption{\label{fig:optimal_s_p_1}Optimal $s$-values for maximizing the number of lattice points in the $1$-ellipse (triangle). The graph plots $\sup S(r)$ versus $\log r$. The plotted $r$-values are multiples of $\sqrt{3}/10$, and the horizontal axis is at height $s=1$.}
\end{figure}

In the remainder of the section we remark that maximizing $s$-values are $\leq 3$ in the limit as $r \to \infty$, and we describe the numerical scheme that generates \autoref{fig:optimal_s_p_1}. Lastly, we explain why $s= \sqrt{2}$ is a good candidate for a cluster value as $r \to \infty$.

\subsection*{The bound on maximizing $s$-values for right triangles ($p=1$)} Given $\e>0$, we claim 
\[
S(r) \subset \big[\frac{1}{3+\e},3+\e\big] \qquad \text{for all large $r$.}
\]
This bound is slightly better than the one in \autoref{thm:s_bounded} (which had $4$ instead of $3$), and can be proved in the same way with the help of a special formula for $N(r,1)$:
\begin{align}
N(r,1) 
&= \# \ \text{first-quadrant lattice points under the line $y=r-x$}\nonumber\\
&= \frac{1}{2} \lfloor r \rfloor \lfloor r -1\rfloor \nonumber\\
&\geq  \frac{1}{2}(r -1)(r -2) 
= \frac{1}{2} r^2 -\frac{3}{2}r + 1.\label{eq:p_1_lower_bound}
\end{align}

\subsection*{How can one efficiently maximize the lattice counting function for the $1$-ellipse?} A brute force method of counting how many lattice points lie under the line $r\Gamma(s)$, and then varying $s$ to maximize that number of lattice points, is simply unworkable in practice. The counting function $N(r,s)$ jumps up and down in value as $s$ varies, sometimes jumping quite rapidly, and a brute force method of sampling at a finite collection of $s$-values can never be expected to capture all such jump points or their precise locations. 

Instead, for a given $r$ we should pre-identify the possible jump values of $s$, and use that information to count the lattice points. We start with the simple observation that a lattice point $(j,k)$ lies under the line $r\Gamma(s)$ if and only if
\[
sj+s^{-1}k \leq r ,
\] 
which is equivalent to
\begin{equation}\label{eq:s_quadratic}
js^2 -r s + k \leq 0 .
\end{equation}
For this quadratic inequality to have a solution, the discriminant must be nonnegative, $r^2-4jk\geq 0$, and thus we need only consider lattice points beneath the hyperbola $r^2=4xy$. For each such lattice point, equality holds in \autoref{eq:s_quadratic} for two positive $s$-values, namely 
\[
s_{min}(j,k;r) = \frac{r-\sqrt{r^2-4jk}}{2j} , \quad
s_{max}(j,k;r) = \frac{r+\sqrt{r^2-4jk}}{2j}.
\]
The geometrical meaning of these values can be understood, as follows: as $s$ increases from $0$ to $\infty$, one endpoint of the line segment $r\Gamma(s)$ slides up on the $y$-axis while the other endpoint moves left on the $x$-axis. The line segment passes through the point $(j,k)$ twice: first when $s=s_{min}(j,k;r)$ and again when $s=s_{max}(j,k;r)$. The point $(j,k)$ lies below the line when $s$ belongs to the closed interval between these two values.  

Thus the counting function is
abad
\begin{align*}
N(r,s)
&=\# \big\{ (j,k):s_{min}(j,k;r) \leq s \leq s_{max}(j,k;r) \big\} \\
& = \sum_{j,k>0}\mathbbm{1}_{s_{min}(j,k;r)\leq s}-\sum_{j,k>0}\mathbbm{1}_{s_{max}(j,k;r)<s} 
\end{align*}
where we sum only over positive-integer lattice points with $4jk \leq r^2$.

The last formula says that the counting function $N(r,s)$ equals the number of values $s_{min}(j,k;r)$ that are less than or equal to $s$ minus the number of values $s_{max}(j,k;r)$ that are less than $s$. To facilitate the evaluation in practice, one should sort the list of values of $s_{min}(j,k;r)$ into increasing order, and similarly sort the list of values of $s_{max}(j,k;r)$. The numbers in these two lists are the only numbers where $N(r,s)$ can change value, as $s$ increases. In particular, when $s$ increases to $s_{min}(j,k;r)$, the point $(j,k)$ is picked up by the line segment for the first time and so $N(r,s)$ increases by $1$. When $s$ increases strictly beyond $s_{max}(j,k;r)$, the point $(j,k)$ is dropped by the line segment and so $N(r,s)$ decreases by $1$. Note the counting function might increase or decrease by more than $1$ at some $s$-values, if the sorted lists of $s_{min}$ and $s_{max}$ values have repeated entries (arising from lattice points that are picked up by, or else dropped by, the line segment at the same $s$-value).  

After sorting the $s_{min}$ and $s_{max}$ lists, we evaluate the maximum of $N(r,s)$ by scanning through the two lists, increasing a counter by $1$ at each number in the sorted $s_{min}$ list, and decreasing the counter just after each number in the sorted $s_{max}$ list. The largest value achieved by the counter is the maximum of $N(r,s)$, and $S(r)$ consists of the closed interval or intervals of $s$-values on which this maximum count is attained. 

By this method, we can maximize the lattice counting function for the $1$-ellipse in a computationally efficient manner, for any given $r>0$. The code is available in \cite[Appendix~B]{thesis}. 

When presenting the results of this method graphically, in \autoref{fig:optimal_s_p_1}, we plot only the largest $s$ value in $S(r)$, because the family of $1$-ellipses is invariant under the map $s \mapsto 1/s$ and so the smallest value in $S(r)$ will be just the reciprocal of the largest value.

\subsection*{Why is the $1$-ellipse not covered by our theorems?} For the $p$-ellipse with $p=1$, \autoref{th:S_limit_general} does not apply because $f$ is linear and so $f^{\prime \prime} \equiv 0$. Specifically, in the proof we see inequalities \autoref{eq:psi_f_bound} and \autoref{eq:psi_g_bound} are no longer useful, since their right sides are infinite. The situation cannot easily be rescued, because the left side of \autoref{eq:gamma_a_asymptotic} need not even be $o(r)$. For example, when $s=1$ and $r$ is an integer, by evaluating the number $N(r,1)$ of lattice points under the curve $y=r-x$ we find 
\[
N(r,1) - r^2 \area(\Gamma) + r(L+M)/2
= \frac{1}{2} r(r-1) - \frac{1}{2} r^2 + r = \frac{1}{2} r ,
\]
which is of order $r$ and hence has the same order as the ``boundary term''  $r(L+M)/2$ on the left side. Thus the method breaks down completely for $p=1$. We seek instead to illuminate the situation through numerical investigations.
 
\subsection*{A cluster value at $s = \sqrt{2}${\,}?} 
Inspired by the numerical calculations in \autoref{fig:optimal_s_p_1}, we will show that $s=\sqrt{2}$ gives a substantially higher count of lattice points than $s=1$, for a certain sequence of $r$-values tending to infinity. This observation suggests (but does not prove) that $\sqrt{2}$ or some number close to it should belong to $S(r)$ for those $r$-values. To be clear: we have not found a proof of this claim. Doing so would provide a counterexample to the idea that the set $S(r)$ converges to $\{ 1 \}$ as $r \to \infty$. 

To compare the counting functions for $s=1$ and $s=\sqrt{2}$, we first notice that for $s=1$ the counting function for the $1$-circle is given by
\[
N(r,1) = \lfloor r \rfloor \lfloor r-1 \rfloor / 2  , \qquad r>0 .
\]
At $s=\sqrt{2}$ the slope of the $1$-ellipse is $-2$, and for the special choice $r=\sqrt{2}(m+1/2)$ with $m \geq 1$ the counting function can be evaluated explicitly as
\[
N(r,\sqrt{2}) = m^2 .
\]
We further choose $m$ such that $r \in (n-1/4,n)$ for some integer $n$, noting that an increasing sequence of such $m$-values can be found due to the density in the unit interval of multiples of $\sqrt{2}$ modulo $1$. Then, writing $r=n-\epsilon$ where $\epsilon<1/4$, we have
\begin{align*}
N(r,\sqrt{2})-N(r,1) 
& = m^2 - (n-1)(n-2)/2 \\
& = \frac{1}{2} (r^2-\sqrt{2}r+1/2) - \frac{1}{2} (r+\epsilon-1)(r+\epsilon-2) \\
& \geq \frac{1}{2}r - (\text{constant}) .
\end{align*}
Hence $\limsup_{r \to \infty} \big( N(r,\sqrt{2})-N(r,1) \big)/r \geq 1/2$, and so $s=\sqrt{2}$ can give (for certain choices of $r$) a substantially higher count of lattice points than $s=1$, as we wanted to show. 

The work above implies that $1 \notin S(r)$ for a sequence of $r$-values tending to infinity. More generally, Marshall and Steinerberger  showed that if $x>0$ is rational then $\sqrt{x}\notin S(r)$ for a sequence of $r$-values tending to infinity (see \cite[Theorem~1]{marshall_steinerberger}), while if $x>0$ is irrational then $\sqrt{x}\notin S(r)$ for all sufficiently large $r$ (see \cite[Lemma~2]{marshall_steinerberger} and its associated discussion).

\subsection*{Conjecture for $p=1$} 
To finish the chapter, we state some of our numerical observations as a conjecture. Let 
\begin{align*}
S &= \{ (r,s): r>0, s \in S(r)\} \subset (0,\infty) \times (0,\infty),\\
\overline{S} &= \text{closure of $S$ in $[0,\infty] \times [0,\infty]$}, \\
S(\infty) & = \{ s\in [0, \infty]: (\infty,s) \in \overline{S}\}.
\end{align*}
Earlier in the chapter we proved that $S(\infty) \subset [1/3,3]$. 

The clustering behavior of $S(r)$ observed in \autoref{fig:optimal_s_p_1} suggests the following conjecture.  
\begin{conjecture}[$p=1$] \label{conj:p_1}
The limiting set $S(\infty)$ is countably infinite, and is contained in 
\[
[1/3,3] \cap \{\sqrt{x}:x\in \mathbb{Q}, x>0 \}.
\]
\end{conjecture}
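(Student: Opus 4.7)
The plan is to treat the two parts of \autoref{conj:p_1} separately: the containment $S(\infty)\subset[1/3,3]\cap\{\sqrt{x}:x\in\mathbb{Q},x>0\}$, and the infinitude of $S(\infty)$. Since the intersection on the right is automatically countable and $S(\infty)\subset[1/3,3]$ was proven earlier in the section, the containment reduces to showing that no $s_0$ with $s_0^2$ irrational is a limit point of $\bigcup_r S(r)$; infinitude then requires producing infinitely many rational $\lambda\in[1/3,3]$ with $\sqrt{\lambda}\in S(\infty)$.

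For the containment, I would start from the representation
\[
N(r,s)=\frac{r^2}{2}-\frac{r}{2}(s+s^{-1})-\sum_{j=1}^{\lfloor r/s\rfloor}\bigl\{sr-s^2 j\bigr\}+O(1),
\]
derived from $N(r,s)=\sum_j\lfloor s(r-sj)\rfloor$, where $\{\cdot\}$ denotes fractional part. When $s^2$ is irrational, Weyl equidistribution forces the sum to equal $\lfloor r/s\rfloor/2+o(r)$, so $N(r,s)-N(r,1)=r[1-(s+s^{-1})/2]+o(r)$, which is negative for $s$ bounded away from $1$. To promote this pointwise assertion (essentially \cite[Lemma~2]{marshall_steinerberger}) to a neighborhood statement around $s_0$, I would apply the Erd\H{o}s--Tur\'an inequality to produce an effective discrepancy bound for the sequence $\{s^2 j\}_{j\le J}$ that is uniform in $s$ over a shrinking window $|s-s_0|<\delta(r)$, using diophantine properties of $s_0^2$, and then verify that $\delta(r)$ can be kept much larger than the scale on which $(s+s^{-1})/2-1$ varies near $s_0\neq 1$. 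This would exclude $s_0$ as a limit point of $S(r)$.

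For infinitude, I would generalize the $s=\sqrt{2}$ calculation at the end of the section. Given a rational $\lambda=p/q$ close to $1$, I would choose $r_n\to\infty$ along an arithmetic progression modulo $1/\sqrt{pq}$ so that a positive-density subset of the fractional parts $\{sr_n-s^2 j\}$ concentrates near zero rather than equidistributing. This gives a downward fluctuation of order $r_n$ in the sawtooth sum, hence an upward bonus of order $r_n$ in $N(r_n,\sqrt{\lambda})$; provided the bonus beats the penalty $r_n\bigl[(\sqrt{\lambda}+1/\sqrt{\lambda})/2-1\bigr]$ (which vanishes quadratically in $\lambda-1$), one concludes $\sqrt{\lambda}\in S(\infty)$. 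Since infinitely many rationals lie arbitrarily close to $1$, infinitude follows.

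The main obstacle will be the containment direction. Near any irrational $s_0$ sit rationals $p/q$ of arbitrarily large denominator, and at each such rational a bonus of the kind exploited in the infinitude step may operate; a complete proof must separate scales according to those denominators and rule out the bonus being effectively ``transferred'' to slightly-off irrational $s$. I expect this to be a genuine diophantine difficulty rather than a soft one, consistent with the statement being offered as a conjecture rather than a theorem in the present paper.
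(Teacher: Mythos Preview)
The statement is a \emph{conjecture}; the paper offers no proof, only the partial evidence that $S(\infty)\subset[1/3,3]$ (from the argument before the conjecture) together with the remark that Marshall and Steinerberger have since established infinitude and the sharper inclusion $S(\infty)\subset[1/\sqrt{5},\sqrt{5}]$. So there is no ``paper's own proof'' to compare against, and your proposal should be read as a strategy toward an open problem.

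Your infinitude argument has a genuine gap. Showing that $N(r_n,\sqrt{\lambda})-N(r_n,1)\geq c\,r_n$ for some positive $c$ and a sequence $r_n\to\infty$ establishes only that $s=1$ is \emph{not} optimal along that sequence; it says nothing about where the actual maximizer $s_n\in S(r_n)$ lies. The paper makes exactly this point about the $\sqrt{2}$ calculation: it ``suggests (but does not prove)'' that $\sqrt{2}$ or something near it is optimal. To conclude $\sqrt{\lambda}\in S(\infty)$ you must produce $s_n\in S(r_n)$ with $s_n\to\sqrt{\lambda}$, which requires comparing $N(r_n,\sqrt{\lambda})$ not merely against $N(r_n,1)$ but against $N(r_n,s)$ for \emph{all} competing $s$. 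Marshall and Steinerberger do establish infinitude, so their argument is the place to look for the missing idea.

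On the containment side you have correctly located the essential difficulty and, to your credit, you name it honestly at the end: excluding an irrational $s_0^2$ as a limit point requires a discrepancy estimate that is \emph{uniform} over a shrinking neighborhood of $s_0$, and the approach via Erd\H{o}s--Tur\'an will run into precisely the rational approximants $p/q$ to $s_0^2$ whose ``bonus'' you exploit in the other direction. This is the heart of why the statement remains conjectural, and your proposal does not resolve it---nor does it claim to.
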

Marshall and Steinerberger \cite[Theorem~2]{marshall_steinerberger} recently proved that $S(\infty)$ contains (countably) infinitely many square roots of rational numbers and is contained in $[1/\sqrt{5},\sqrt{5}]$. For example, they showed the set $S(\infty)$ contains $1$ and $\sqrt{3/2}$. Yet a precise characterization of the set remains elusive. One would like a characterization in terms of some number theoretic condition.

\section{\bf Connection between counting function maximization and eigenvalue minimization} 
\label{sec:relation}

Maximizing a counting function is morally equivalent to minimizing the size of the things being counted. Let us apply this general principle to the case of the circle 
\[
\Gamma:x^2+y^2=1 \quad \text{in the first quadrant,}
\]
and its associated ellipses $r\Gamma(s)$. In this section, $L=M=1$ and $\area (\Gamma) = \pi/4$.

\subsection*{Minimizing eigenvalues of the Dirichlet Laplacian on rectangles}
Write 
\begin{equation} \label{eq:Dirichleteigen}
\{ \lambda_n(s) : n=1,2,3,\ldots \} = \{ (js)^2+(ks^{-1})^2 : j,k =1,2,3,\ldots \} 
\end{equation}
so that $\lambda_n(s)$ is the $n$th eigenvalue of the Dirichlet Laplacian on a rectangle of side lengths $s^{-1}\pi$ and $s\pi$. (The eigenfunctions have the form $\sin(jsx) \sin(ks^{-1}y)$.) Then the lattice point counting function is the eigenvalue counting function, because
\begin{align*}
N(r,s) 
& = \# \{ (j,k) :  (js)^2+(ks^{-1})^2 \leq r^2 \} \\
& = \# \{ n : \lambda_n(s) \leq r^2 \} .
\end{align*}

Define 
\[
S_*(n) = \argmin_{s>0} \lambda_n(s) ,
\]
so that $S_*(n)$ is the set of $s$-values that minimize the $n$th eigenvalue. 

The next result says that the rectangle minimizing the $n$th eigenvalue will converge to a square as $n \to \infty$. 
\begin{corollary}[Optimal Dirichlet rectangle is asymptotically balanced, due to Antunes and Freitas \protect{\cite[Theorem~2.1]{AF13}}; Gittins and Larson \protect{\cite{GL17}}] \label{co:relationDirichlet} \ 

\noindent The optimal stretch factor for minimizing $\lambda_n(s)$ approaches $1$ as $n \to \infty$, with 
\[
S_*(n) \subset [1-O(n^{-1/12}),1+O(n^{-1/12})] ,
\]
and the minimal Dirichlet eigenvalue satisfies the asymptotic formula 
\[
\min_{s > 0} \lambda_n(s) = \frac{4}{\pi} n + \Big( \frac{4}{\pi} \Big)^{\! 3/2} n^{1/2} + O(n^{1/3}) .
\]
\end{corollary}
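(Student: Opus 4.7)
The strategy is to translate the eigenvalue minimization into the lattice-point maximization already solved in \autoref{th:S_limit}, exploiting the identity $N(r,s)=\#\{k:\lambda_k(s)\leq r^2\}$ that follows immediately from \autoref{eq:Dirichleteigen}. Writing $\lambda_n^* = \min_s \lambda_n(s)$, $r^* = \sqrt{\lambda_n^*}$, and $\mu(r) = \max_s N(r,s)$, one sees that every $s^* \in S_*(n)$ satisfies $N(r^*,s^*)\geq n$, while the definition of $r^*$ forces $\mu(r)\leq n-1$ for each $r<r^*$.

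First I would derive the eigenvalue asymptotic. Applying \autoref{th:S_limit} to the unit circle (for which $\area(\Gamma)=\pi/4$ and $L=1$) gives $\mu(r)=\tfrac{\pi}{4}r^2 - r + O(r^{2/3})$. Combined with the sandwich $\mu(r^*)\geq n > \mu(r^*-\varepsilon)$, this yields
\[
\tfrac{\pi}{4}(r^*)^2 - r^* = n + O\bigl((r^*)^{2/3}\bigr),
\]
and bootstrapping once via the leading-order relation $r^* \sim 2\sqrt{n/\pi}$ produces the claimed expansion $\lambda_n^* = \tfrac{4}{\pi}n + (\tfrac{4}{\pi})^{3/2} n^{1/2} + O(n^{1/3})$.

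Next, for the stretch-factor claim, fix any $s^* \in S_*(n)$, so that $N(r^*,s^*)\geq n$. Repeating the two-term upper bound argument from the proof of \autoref{thm:s_bounded} (namely \autoref{prop:counting_ineq}(b) paired with the formula for $r^*$ just obtained) would produce constants $s_1,s_2>0$, independent of $n$, with $s^* \in [s_1,s_2]$. With $s^*$ in a fixed compact interval, the sharp counting estimate \autoref{th:asymptotic}(b) applies and gives
\[
N(r^*,s^*) = \tfrac{\pi}{4}(r^*)^2 - \tfrac{r^*}{2}(s^*+s^{*-1}) + O\bigl((r^*)^{2/3}\bigr).
\]
Comparing $N(r^*,s^*)\geq n$ against the formula for $n$ from the previous paragraph leaves $s^* + s^{*-1} \leq 2 + O((r^*)^{-1/3})$, and a square-completion (as in the final step of \autoref{prop:unified}, via \autoref{le:squarecompletion}) converts this into $|s^*-1| = O((r^*)^{-1/6}) = O(n^{-1/12})$, since $r^*\sim\sqrt{n}$.

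The hard part will be justifying the uniform bound on $s^*$ in the third paragraph: the inclusion $S(r^*)\subset S_*(n)$ is immediate from the minimality of $\lambda_n^*$, but the reverse inclusion can fail if $N(r^*,\cdot)$ reaches $n$ at several $s$-values while attaining $\mu(r^*)$ at only some of them, so one cannot simply quote the boundedness statement of \autoref{thm:s_bounded} and must instead control every $s^* \in S_*(n)$ directly. Fortunately \autoref{prop:counting_ineq}(b) does not presuppose $s^*$ to be in a compact interval, so the proof of \autoref{thm:s_bounded} adapts verbatim with $r$ replaced by $r^*$ and the starting inequality $N(r,1)\geq \cdots$ replaced by $N(r^*,s^*)\geq n$ combined with the asymptotic for $n$ established in Step~1.
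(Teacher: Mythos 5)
Your proposal is correct and follows essentially the same route the paper sketches: translate via the identity $N(r,s)=\#\{k:\lambda_k(s)\leq r^2\}$, use \autoref{th:S_limit} (hence \autoref{th:asymptotic}) at $r^*=\sqrt{\lambda_n^*}$, and bound the stretch factor via \autoref{prop:counting_ineq}(b). You also correctly flagged the one subtlety the reduction involves, namely that $S_*(n)$ need not coincide with $S(r^*)$, so the boundedness of $S(r)$ from \autoref{thm:s_bounded} cannot be quoted directly and each $s^*\in S_*(n)$ must be controlled separately from $N(r^*,s^*)\geq n$.

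The only place you are a touch too casual is the remark that ``\autoref{prop:counting_ineq}(b) does not presuppose $s^*$ to be in a compact interval.'' It does presuppose $r^*\geq s^*/L$, so before applying it you need the analogue of \autoref{lemma:bound} for the eigenvalue minimizer; but this is immediate, since $N(r^*,s^*)\geq n\geq 1$ forces the $x$- and $y$-intercepts $r^*s^{*-1}L$ and $r^*s^*L$ of $r^*\Gamma(s^*)$ to be at least $1$, giving $s^*\in[(r^*L)^{-1},r^*L]$. With that one-line addition the argument is complete and matches the paper's intended proof.
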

The proof is a modification of our \autoref{th:S_limit}. Full details are provided in the ArXiv version of this paper \cite[Corollary 10]{Lau_Liu}. In the proof one relies on \autoref{prop:counting_ineq} to bound the stretch factor $s$ of the optimal rectangle. \autoref{prop:counting_ineq} is simpler in both statement and proof than the corresponding Theorem~3.1 of Antunes and Freitas \cite{AF13}, which contains an additional lower order term with an unhelpful sign. 

\begin{remark}
One would like to prove using only the definition of the counting function that 
\[
\text{$S_*(n) \to 1$ \quad if and only if \quad $S(r) \to 1$,}
\]
or in other words that the rectangle minimizing the $n$th eigenvalue will converge to a square if and only if the ellipse maximizing the number of lattice points converges to a circle. Then \autoref{co:relationDirichlet} would follow qualitatively from \autoref{th:S_limit}, without needing any additional proof. Our attempts to find such an abstract equivalence have failed due to possible multiplicities in the eigenvalues. Perhaps an insightful reader will see how to succeed where we have failed. 
\end{remark}

\subsection*{Maximizing eigenvalues of the Neumann Laplacian on rectangles}
If one considers lattice points in the closed first quadrant, that is, allowing also the lattice points on the axes, then one obtains the Neumann eigenvalues of the rectangle having side lengths $s^{-1}\pi$ and $s\pi$:
\[
\{ \mu_n(s) : n=1,2,3,\ldots \} = \{ (js)^2+(ks^{-1})^2 : j,k =0,1,2,\ldots \} .
\]
Notice the first eigenvalue is always zero: $\mu_1(s)=0$ for all $s$. The lattice point counting function is once again an eigenvalue counting function, because
\[
\mathcal{N}(r,s) 
= \# \{ (j,k) :  (js)^2+(ks^{-1})^2 \leq r^2 \} = \# \{ n : \mu_n(s) \leq r^2 \} .
\]
The appropriate problem is to maximize the $n$th eigenvalue (rather than minimizing as in the Dirichlet case), and so we let 
\[
\mathcal{S}_*(n) = \argmax_{s>0} \mu_n(s) .
\]

The corollary below says that the rectangle maximizing the $n$th Neumann eigenvalue will converge to a square as $n \to \infty$. 
\begin{corollary}[Optimal Neumann rectangle is asymptotically balanced, due to van den Berg, Bucur and Gittins \protect{\cite{BBG16a}}; Gittins and Larson \protect{\cite{GL17}}] \label{co:relationNeumann} \ 

\noindent The optimal stretch factor for maximizing $\mu_n(s)$ approaches $1$ as $n \to \infty$, with
\[
\mathcal{S}_*(n) \subset [1-O(n^{-1/12}),1+O(n^{-1/12})] ,
\]
and the maximal Neumann eigenvalue satisfies the asymptotic formula 
\[
\max_{s > 0} \mu_n(s) = \frac{4}{\pi} n - \Big( \frac{4}{\pi} \Big)^{\! 3/2} n^{1/2} + O(n^{1/3}) .
\]
\end{corollary}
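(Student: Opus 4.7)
The approach is to exploit the duality
\[
\mathcal{N}(r,s) = \#\{k \geq 1 : \mu_k(s) \leq r^2\}, \qquad \mu_n(s) = \min\{r^2 : \mathcal{N}(r,s) \geq n\},
\]
which translates maximizing the Neumann eigenvalue $\mu_n(s)$ in $s$ into minimizing the counting function $\mathcal{N}(r,s)$ in $s$, for which the required asymptotic information is supplied by \autoref{th:S_limit_neumann} applied to the quarter unit circle (where $\area(\Gamma)=\pi/4$ and $L=M=1$).

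Fix $n$ large, pick a maximizer $s^* \in \mathcal{S}_*(n)$, and set $\Lambda^* = \mu_n(s^*) = \max_s \mu_n(s)$; note $\Lambda^* \to \infty$ by Weyl's law. I would bracket $\Lambda^*$ from two sides. On one side, since $\mu_n(s) \leq \Lambda^*$ for every $s$, we have $\mathcal{N}(\sqrt{\Lambda^*},s) \geq n$ uniformly in $s$, so $\min_s \mathcal{N}(\sqrt{\Lambda^*},s) \geq n$. On the other side, because the $n$th eigenvalue at $s^*$ equals $\Lambda^*$, each $\varepsilon>0$ small gives $\mathcal{N}(\sqrt{\Lambda^*-\varepsilon},s^*) \leq n-1$, hence $\min_s \mathcal{N}(\sqrt{\Lambda^*-\varepsilon},s) \leq n-1$. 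Substituting the asymptotic $\min_s \mathcal{N}(r,s) = \tfrac{\pi}{4}r^2 + r + O(r^{2/3})$ into both and letting $\varepsilon\to 0^+$ yields
\[
\tfrac{\pi}{4}\Lambda^* + \sqrt{\Lambda^*} = n + O(n^{1/3}).
\]
Solving this quadratic for $\sqrt{\Lambda^*}$ (using $\Lambda^* = \Theta(n)$ so that $(\Lambda^*)^{2/3} = O(n^{1/3})$) produces the claimed asymptotic $\Lambda^* = \tfrac{4}{\pi}n - (\tfrac{4}{\pi})^{3/2}\sqrt{n} + O(n^{1/3})$.

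For the stretch factor bound, I would apply the sharper two-term counting estimate \autoref{th:asymptotic}(b) at $s=s^*$ — the quarter circle $f(x)=\sqrt{1-x^2}$ satisfies its hypotheses with, e.g., $\alpha=\beta=1/\sqrt{2}$, since $f''(x)=-(1-x^2)^{-3/2}$ is negative and monotonic on $[0,1/\sqrt{2}]$ — which after translating to $\mathcal{N}$ via \autoref{le:relation} gives
\[
\mathcal{N}(r,s) = \tfrac{\pi}{4}r^2 + \tfrac{r}{2}(s^{-1}+s) + O(r^{2/3})
\]
uniformly for $s$ in any compact subinterval of $(0,\infty)$. The two $s^*$-specific inequalities from the preceding paragraph force $\mathcal{N}(\sqrt{\Lambda^*},s^*) = n + O(n^{1/3})$; subtracting the identity $\tfrac{\pi}{4}\Lambda^* + \sqrt{\Lambda^*} = n + O(n^{1/3})$ then collapses this to $(s^*)^{-1}+s^* = 2 + O(n^{-1/6})$, and invoking \autoref{le:squarecompletion} (as used in the proof of \autoref{prop:unified}) yields $s^* \in [1-O(n^{-1/12}),1+O(n^{-1/12})]$.

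The main technical difficulty is managing the step-function nature of $\mathcal{N}$ cleanly at jumps: the $\varepsilon\to 0^+$ passage must be carried out so that the $O(r^{2/3})$ error in \autoref{th:S_limit_neumann} remains uniform down to $r=\sqrt{\Lambda^*}$. A smaller subsidiary step is to establish the a priori bound $s^* \in [s_1,s_2]$ needed for uniformity in \autoref{th:asymptotic}(b); this follows from $\mu_n(s)\to 0$ as $s\to 0^+$ or $s\to\infty$ (since the eigenvalues $(js)^2+(ks^{-1})^2$ with $j=0$ accumulate to zero at the extremes) while $\Lambda^*\to\infty$, in parallel with the reasoning behind \autoref{thm:s_bounded_neumann}.
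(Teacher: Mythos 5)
Your approach is essentially the same as the paper's: translate through the counting-function duality, invoke the two-term Neumann counting asymptotic from \autoref{th:S_limit_neumann} to pin down $\max_s \mu_n(s)$, and then use the $s$-explicit estimate from \autoref{th:asymptotic}(b) (via \autoref{le:relation}) together with \autoref{le:squarecompletion} to force $(s^*)^{-1}+s^* = 2 + O(n^{-1/6})$ and hence $s^* = 1 + O(n^{-1/12})$. The bracketing of $\Lambda^* = \mu_n(s^*)$ via $\mathcal{N}(\sqrt{\Lambda^*},\cdot) \geq n$ and $\mathcal{N}(\sqrt{\Lambda^*-\varepsilon},s^*) \leq n-1$ is clean; the $\varepsilon \to 0^+$ passage you flag as a difficulty is actually benign, since the $O(r^{2/3})$ constant in \autoref{th:asymptotic}(b) depends only on the compact interval $[s_1,s_2]$ and the leading terms are continuous in $r$.

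The one genuine gap is the a priori bound $\mathcal{S}_*(n) \subset [s_1,s_2]$ with $s_1,s_2$ independent of $n$. Your argument --- that $\mu_n(s) \to 0$ as $s \to 0^+$ or $s \to \infty$ while $\Lambda^* \to \infty$ --- only shows $\mathcal{S}_*(n)$ is bounded for each fixed $n$; it does not give uniformity. Quantitatively, the $j=0$ row of the spectrum gives $\mu_n(s) \leq (n-1)^2/s^2$, which together with $\Lambda^* = \Theta(n)$ yields only $s^* = O(\sqrt{n})$. To obtain a uniform bound you must actually run the argument of \autoref{thm:s_bounded_neumann}: from $\mu_n(s^*) \geq \mu_n(1)$ deduce $\mathcal{N}(\sqrt{\Lambda^*},1)\geq n$ and $\mathcal{N}(\sqrt{\Lambda^*-\varepsilon},s^*)\leq n-1$, then insert the lower bound $\mathcal{N}(r,s) \geq \tfrac{\pi}{4}r^2 + \tfrac12\mathcal{C}rs$ of \autoref{prop:counting_ineq_neumann}(b) on the left and the crude upper bound $\mathcal{N}(r,1) \leq \tfrac{\pi}{4}r^2 + 2r + 1$ (from \autoref{le:relation} and $N(r,1) \leq \tfrac{\pi}{4}r^2$) on the right, and cancel the area terms to get $s^* \leq 4/\mathcal{C} + o(1)$. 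You mention ``in parallel with the reasoning behind \autoref{thm:s_bounded_neumann},'' so you clearly sense that this is what's needed, but the reason you actually give is a red herring that would not close the argument; the indispensable ingredient, which the paper calls out explicitly, is \autoref{prop:counting_ineq_neumann}.
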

One adapts the arguments used for \autoref{th:S_limit_neumann}. A complete proof is in \cite[Corollary 11]{Lau_Liu}. Note that our lower bound on the counting function in \autoref{prop:counting_ineq_neumann}, which one uses to control the stretch factor $s$ of the optimal rectangle, is simpler in both statement and proof than the corresponding Lemma~2.2 by van den Berg \emph{et al.}\ \cite{BBG16a}. Further, our \autoref{prop:counting_ineq_neumann} holds for all $r>0$, whereas \cite[Lemma~2.2]{BBG16a} holds only for $r \geq 2s$. Consequently we need not establish an \emph{a priori} bound on $s$ as was done in \cite[Lemma~2.3]{BBG16a}. 

Those authors did obtain a slightly better rate of convergence than we do, by calling on sophisticated lattice counting estimates of Huxley; see the comments after \autoref{th:asymptotic}.

\appendix 
\section{\bf The van der Corput sum} \label{app-exponential}

The next thoerem is due to van der Corput \cite[Satz~5]{vdC23}, and is central to the proofs of \autoref{th:asymptotic} and \autoref{th:asymptotic_general}. We formulate the theorem as in Kr\"atzel \cite[Korollar zu Satz 1.5, p.~24]{kratzel00}. The constants in the theorem are interesting only for being modest in size.

Recall the sawtooth function $\psi(x)=x-\lfloor x \rfloor -1/2$. 

\begin{theorem} \label{lemma:kratzel}
Suppose $a<b$ and $h \in C^2[a,b]$ with $h''$ monotonic and nonzero. Then
\[
\Big|\sum_{a<n \leq b} \psi\big(h(n)\big) \Big|\leq 6 \int_a^b |h''(t)|^{1/3} \ud t + 175 \max_{[a,b]} \frac{1}{|h''|^{1/2}}+1 .
\]
\end{theorem}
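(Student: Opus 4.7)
The plan is to expand $\psi(h(n))$ in a truncated Fourier series and reduce the estimate to bounds on exponential sums $\sum_n e(k h(n))$, which can be controlled via the classical van der Corput second derivative test. Since $\psi$ is a sawtooth, its Fourier series is $\psi(x) = -\sum_{k \geq 1} \sin(2\pi k x)/(\pi k)$, but this converges only conditionally. To proceed with explicit constants, I would invoke a Vaaler-type (or Beurling-Selberg) trigonometric approximation of the form
\[
\psi(x) = \sum_{1 \leq |k| \leq K} a_k\, e(kx) + R_K(x), \qquad |a_k| \leq \tfrac{1}{\pi|k|},
\]
with $R_K$ small ``on average'' — in particular, when summed over the integer arguments $n \in (a,b]$ the error is controlled by $(b-a)/K$ plus a bounded constant.

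The main tool for the principal term is van der Corput's second derivative test in the explicit form recorded in Kr\"atzel: if $h''$ is monotonic and nonzero on $[a,b]$ with $|h''| \asymp \lambda$, then
\[
\Big|\sum_{a < n \leq b} e(k h(n))\Big| \lesssim (k\lambda)^{1/2}(b-a) + (k\lambda)^{-1/2}.
\]
Multiplying by $|a_k| \leq 1/(\pi k)$ and summing over $1 \leq k \leq K$, the $k^{-1/2}$-weighted sum gives $K^{1/2} \lambda^{1/2}(b-a)$, while the $k^{-3/2}$-weighted sum gives $O(\lambda^{-1/2})$. Adding the Vaaler truncation error $O((b-a)/K + 1)$ and optimizing in $K$ by balancing $K^{1/2}\lambda^{1/2}(b-a)$ against $(b-a)/K$ gives $K \sim \lambda^{-1/3}$, producing the target bound $(b-a)\lambda^{1/3} + \lambda^{-1/2}$ on a subinterval where $|h''|$ is essentially constant $\lambda$.

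To convert $(b-a)\lambda^{1/3}$ into the integral $\int_a^b |h''(t)|^{1/3}\,dt$, I would partition $[a,b]$ into dyadic subintervals on which $|h''|$ ranges over a dyadic range — this is possible precisely because $h''$ is assumed monotonic on $[a,b]$. On each piece the bound above applies uniformly, and summing the $(b-a)\lambda^{1/3}$ contributions assembles into the Riemann sum for $\int |h''|^{1/3}$, while the $\lambda^{-1/2}$ contributions telescope (again using monotonicity of $h''$) and are dominated by the single worst term $\max_{[a,b]} |h''|^{-1/2}$.

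The main obstacle is not the structure of the argument, which is classical, but rather the careful bookkeeping of the explicit numerical constants $6$, $175$, and $1$. These are sensitive to: the choice of the Vaaler-type mollifier (which controls $a_k$ and the pointwise majorant for $R_K$), the explicit constant in the van der Corput second derivative test, and the overhead incurred by the dyadic decomposition when aggregating across subintervals. I would therefore follow Kr\"atzel's monograph \cite{kratzel00} for the precise constant-tracked versions at each step, rather than reproving them from scratch, since this appendix exists only to quote the statement in a form usable by \autoref{th:asymptotic} and \autoref{th:asymptotic_general}.
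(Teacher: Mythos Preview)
Your outline is the standard route to this estimate and is structurally sound: Fourier-expand $\psi$, truncate via a Vaaler-type approximation, apply the van der Corput second derivative test to the exponential sums $\sum_n e(kh(n))$, optimize the truncation parameter $K$, and handle the variability of $h''$ by a dyadic decomposition exploiting its monotonicity. That is precisely the machinery behind Kr\"atzel's Korollar zu Satz~1.5.

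However, the paper does \emph{not} prove this theorem. It simply quotes the result from van der Corput \cite{vdC23} in the formulation of Kr\"atzel \cite[p.~24]{kratzel00}, noting only that Kr\"atzel's stated constant ``$+2$'' can be sharpened to ``$+1$'' by patching a gap and arguing carefully in one case, with details deferred to the thesis \cite[Theorem~A.8]{thesis}. So there is no ``paper's own proof'' to compare against; the appendix exists purely to record the statement for use in \autoref{th:asymptotic} and \autoref{th:asymptotic_general}. Your instinct in the final paragraph --- to follow Kr\"atzel's monograph for the constant-tracked steps rather than rederive them --- matches exactly what the paper does.

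One small caution on your sketch: the claim that the $\lambda^{-1/2}$ contributions from the dyadic pieces ``telescope'' to a single $\max |h''|^{-1/2}$ is slightly imprecise. They do not literally telescope; rather, since $|h''|$ is monotonic the dyadic levels $\lambda_j$ form a geometric sequence, so $\sum_j \lambda_j^{-1/2}$ is dominated by its largest term (corresponding to the minimum of $|h''|$) up to an absolute constant. This is where some of the constant $175$ is spent.
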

Kr\"{a}tzel's result has ``$+2$'' as the final term. We obtain the theorem with ``$+1$'' by correcting a gap in the proof and arguing carefully in one case, as explained in \cite[Theorem~A.8]{thesis}. The constant term is irrelevant for our purposes in this paper, in any case.

\section{\bf An elementary lemma}
\begin{lemma} \label{le:squarecompletion}
If $s>0$ and $0<t<1$ then 
\[
s+s^{-1} \leq 2 +t \quad \Longrightarrow \quad | s - 1 | \leq 3\sqrt{t} .
\]
\end{lemma}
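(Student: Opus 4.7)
\medskip

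\noindent\textbf{Proof proposal.}
The plan is to multiply through by $s$ and recognize the hypothesis as a disguised statement about $(s-1)^2$. Namely, multiplying $s+s^{-1}\le 2+t$ by the positive quantity $s$ gives
\[
s^2+1 \le (2+t)s, \qquad \text{i.e.,} \qquad (s-1)^2 \le ts.
\]
So the whole question is reduced to obtaining an a priori upper bound on $s$ itself.

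For that, first observe that since $0<t<1$ the hypothesis implies $s+s^{-1}<3$. I would then argue by contradiction (or monotonicity of $s\mapsto s+s^{-1}$ on $[1,\infty)$): if $s\ge 3$ then $s+s^{-1}\ge 3+\tfrac13>3$, contradiction. Therefore $s<3$.

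Combining the two steps, $(s-1)^2 \le ts < 3t \le 9t$, and taking square roots yields $|s-1|< 3\sqrt{t}$, as required. The only potential obstacle is making sure the reduction $(s-1)^2\le ts$ is valid for \emph{all} $s>0$ (in particular for small $s$, where one might worry about the sign when multiplying by $s$); this is fine because $s>0$ is part of the hypothesis. Everything else is a one-line calculation.
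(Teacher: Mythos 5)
Your proof is correct, and it takes a genuinely different route from the paper. The paper completes the square in terms of $s^{1/2}$ and $s^{-1/2}$, writing the hypothesis as $(s^{1/2}-s^{-1/2})^2 \le t$, then takes square roots to get $|s^{1/2}-1|\le\sqrt{t}$ (using that $1$ lies between $s^{1/2}$ and $s^{-1/2}$), and finally squares again and invokes $t < t^{1/2}$ to recover a bound on $|s-1|$. You instead multiply the hypothesis by $s$ to obtain the clean inequality $(s-1)^2 \le ts$, and then separately derive an a priori bound $s<3$ from $s+s^{-1}<3$. Your approach buys simplicity in the final step (one inequality chain, no careful bookkeeping with $t$ vs.\ $t^{1/2}$), at the cost of needing the auxiliary upper bound on $s$; the paper's approach avoids that extra step but is trickier at the end. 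As a bonus, your argument actually establishes the slightly sharper bound $|s-1| < \sqrt{3t}$ before you deliberately weaken it to $3\sqrt{t}$ to match the stated conclusion.
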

\begin{proof} By taking the square root on both sides of the inequality 
\[
( s^{1/2}-s^{-1/2})^2 = s + s^{-1} - 2 \leq t ,
\]
and then using that the number $1$ lies between $s^{1/2}$ and $s^{-1/2}$, we find
\[
| s^{1/2} - 1 | \leq t^{1/2} .
\]
Hence $1 - t^{1/2} \leq s^{1/2} \leq 1 + t^{1/2}$, and now squaring both sides and using that $t<t^{1/2}$ (when $t<1$) proves the lemma. 
\end{proof}

\section*{Acknowledgments}
This research was supported by grants from the Simons Foundation (\#204296 and \#429422 to Richard Laugesen) and the University of Illinois Scholars' Travel Fund, and also by travel funding from the conference \emph{Fifty Years of Hearing Drums: Spectral Geometry and the Legacy of Mark Kac} in Santiago, Chile, May 2016. At that conference, Michiel van den Berg generously discussed  his papers \cite{BBG16a,BBG16b} with Laugesen. The anonymous referee made useful suggestions that improved the organization of the paper. The material in this paper forms part of Shiya Liu's Ph.D. dissertation at the University of Illinois, Urbana--Champaign \cite{thesis}.

\end{document}